\newcommand{\IZ}{{\mathbb{Z}}} 
\newcommand{\IZn}{\IZ^n}
\newcommand{\IR}{{\mathbb{R}}} 
\newcommand{\IRn}{\IR^n}
\newcommand{\sR}{\mathcal{R}}
\DeclareMathOperator{\cone}{cone}
\DeclareMathOperator{\conv}{conv}
\DeclareMathOperator{\bd}{bd}
\DeclareMathOperator{\interior}{int}
\newcommand{\stwo}{{\mathcal{S}}^{2}} 
\newcommand{\sn}{{\mathcal{S}}^{n}} 
\newcommand{\sngeo}{{\mathcal{S}}^{n}_{\geq0}} 
\newcommand{\sngo}{{\mathcal{S}}^{n}_{>0}}
\newcommand{\nonn}{{\mathcal{N}^{n}}}
\newcommand{\IZngeo}{\IZ_{\geq0}^n}
\newcommand{\IZnmo}{\IZn\setminus\{0\}}
\newcommand{\IZngeomo}{\IZngeo\setminus\{0\}}
\newcommand{\IRngeo}{\IR_{\geq0}^n}
\newcommand{\IRgeo}{\IR_{\geq0}}
\newcommand{\cop}{{\mathcal{COP}}} 
\newcommand{\coptwo}{\cop^2}
\newcommand{\copn}{{\mathcal{COP}}^{n}} 
\newcommand{\cp}{{\mathcal{CP}}}
\newcommand{\cpn}{{\mathcal{CP}}^{n}} 
\DeclareMathOperator{\Min}{Min}
\DeclareMathOperator{\minC}{min_{\cop}}
\DeclareMathOperator{\MinC}{Min_{\cop}}
\newcommand{\gl}{\operatorname{GL}}
\newcommand{\glnz}{\gl_n (\IZ)}
\newcommand{\glnr}{\gl_n (\IR)}
\newcommand{\lfp}{\mathcal{P}}
\newcommand{\lfpfacef}{\mathcal{F}}
\newcommand{\lfpfaceg}{\mathcal{G}}
\newcommand{\polyh}{\mathsf{P}}
\newcommand{\pfacef}{\mathsf{F}}
\newcommand{\pfaceg}{\mathsf{G}}
\title{%
    Perfect copositive matrices
    }
\author{%
    Valentin Dannenberg and Achill Sch\"urmann
    }
\abstract{%
	In this paper we give a first study of perfect copositive \( n\times n \) matrices.
	They can be used to find rational certificates for completely positive matrices.  
	We describe similarities and differences to classical perfect, positive definite matrices.
	Most of the differences occur only for \( n\geq3 \), where we find for instance lower rank and 
    indefinite perfect matrices.
	Nevertheless, we find for all $n$ that for every classical perfect matrix  
	there is an arithmetically equivalent one which is also perfect copositive.
	Furthermore we study the neighborhood graph and polyhedral structure of perfect copositive 
    matrices. 
	As an application we obtain a new characterization of the
	cone of completely positive matrices: It is equal to the set of nonnegative matrices having a
	nonnegative inner product with all perfect copositive matrices.
    }
\keywords{%
    copositive programming,
    complete positivity,
    matrix factorization,
    copositive minimum
    }
\begin{document}

\section{Introduction and Overview}

Perfect matrices and their respective perfect quadratic forms have been investigated intensively since the
19th century.
This was mainly driven by a~number theoretical interest in connection with the arithmetical minimum
of a~positive definite quadratic form.
A milestone in the classical theory is a~reduction theory based on perfect forms due to
Voronoi~\cite{voronoi-1907}.
His theory has been generalized in different directions (see~\cite{martinet-2002}), with important
steps in particular by Koecher~\cite{koecher-1960, koecher-1960a} and
Opgenorth~\cite{opgenorth-2001}.
While Koecher generalized the classical theory for the cone of positive definite matrices to a~
theory within any selfdual convex cone of symmetric matrices, Opgenorth took the theory to any
pair of dual convex cones.

In the recent paper~\cite{dsv-2021} a~generalized theory of perfect matrices in
the cone of copositive matrices is used for an algorithmic solution of a~fundamental problem in
copositive optimization:
With a~Voronoi-like simplex-type algorithm it is possible to obtain a~rational certificate for a~
given symmetric matrix which is completely positive.
The central object in the algorithm is the neighboring graph of generalized perfect matrices,
which we call \emph{perfect copositive matrices} here.

In this paper we give a~first systematic study of perfect copositive matrices.
In particular, we show where there are similarities and differences to the classical theory of
perfect matrices.

Our paper is organized as follows:
In Section~\ref{sec:classical} we review the most important notions from the classical theory we
need for our generalization, defined in Section~\ref{sec:copositive}.
Sections~\ref{sec:nequal2} and~\ref{sec:newphenomenaN3} are devoted to a~closer look at
\(n\times n \) matrices for \(n=2 \) and \(n=3 \):
We find that a~lot of differences between the classical and the copositive theory occur only for
\(n \geq 3 \).
In Section~\ref{sec:embedding} we show that the classical theory can, in essence, be embedded in
the copositive theory.
Finally, in Section~\ref{sec:polyrepr} we examine the underlying polyhedral structure
and obtain a~new characterization for the cone of completely positive matrices.

\section{Perfect Positive Definite Matrices}\label{sec:classical}

The classical theory is concerned with positive definite \(n\times n \) matrices $Q$ and their
\emph{arithmetical minimum}
\[
	\min Q=\min\big\{ Q[v] :\ v\in\IZnmo \big\}.
\]
Here and in what follows we use the notation \(Q[v] \) for \(v^\intercal Qv \).
Classically, perfect matrices are symmetric positive definite matrices that are uniquely determined
by their arithmetical minimum and the integer vectors
\[
	\Min Q=\big\{ v\in\IZn :\ Q[v]=\min(Q) \big\}
\]
attaining it.
For a~fixed integer vector $v$ the condition \(Q[v]=c \), for a~constant $c$, is linear for
the variable $Q$ in the space \(\sn \) of symmetric \(n \times n \) matrices.
Therefore, it is sufficient to have \(\dim\sn=\binom{n+1}{2} \) linear independent
conditions in \(\sn \) through integer vectors $v$ in order to obtain a~perfect matrix $Q$
as a~unique solution of a~system of linear equations.

Using linearities, a~milestone in the classical theory is a~polyhedral reduction theory for
positive definite forms and matrices due to Voronoi~\cite{voronoi-1907}.
A central result for his theory is the fact that in any given dimension $n$ and for a~fixed
arithmetical minimum, there exist only finitely many perfect matrices up to
\emph{arithmetical equivalence}.
Here, two symmetric matrices \(Q_1 \) and \(Q_2 \) are called arithmetically equivalent,
if there exists a~unimodular matrix \(U\in\glnz \) with \(Q_2=U^\intercal Q_1U \).
Classifying perfect matrices up to arithmetical equivalence had already attracted Lagrange
(\(n=2 \)), Gauß (\(n=3 \)) and others.
Based on his theory Voronoi gave an algorithm for the classification for any given dimension $n$.
Still being the only known approach for classification in arbitrary dimension, it has been used by
several authors, see~\cite{dsv-2007},~\cite{martinet-2002},~\cite{schuermann-2009a},~\cite{vanwoerden-2018}.
Due to the difficulty of polyhedral representation conversions the classification is nevertheless
still open for all \(n\geq 9 \).

Voronoi's algorithm in modern terms can be described as a~graph traversal search on the
vertex-edge graph of the \emph{Ryshkov-Polyhedron}
\begin{equation}\label{eq:ryshkov-poly}
	\sR = \big\{ Q\in\sn :\ Q[v]\geq1 \text{ for all } v\in\IZnmo \big\}.
\end{equation}
Its vertices correspond to perfect matrices of arithmetical minimum $1$.
For every perfect matrix $P$, respectively for every vertex of the Ryshkov polyhedron,
its \emph{Voronoi cone} is the convex cone
\[
	\cone\big\{ xx^\intercal :\ x\in\Min P \big\}
	=\left\{ \sum_{i=1}^{m}\alpha_ix_ix_i^\intercal:\
        m\in\mathbb{N}, \alpha_i\in\IRgeo, x_i\in\Min P
     \right\},
\]
generated by rank-$1$-matrices given by its minimal vectors.
From general duality theory we can deduce that the union of Voronoi cones of perfect matrices gives
a polyhedral decomposition for the rational closure of the cone of positive definite matrices.
We refer to~\cite{schuermann-2009} for more details and proofs of the classical theory.

\section{Perfect copositive matrices}\label{sec:copositive}

Over the years, different variants of Voronoi's theory and algorithm have been developed.
In the context of number theory, in particular the works of
Koecher~\cite{koecher-1960},~\cite{koecher-1960a} and Opgenorth~\cite{opgenorth-2001} have provided
significant contributions.
In them, the cone of positive definite matrices is replaced by any selfdual convex cone,
respectively a~pair of convex cones dual to each other.

Building on these ideas, a~variant of the Ryshkov polyhedron was defined and used in~\cite{dsv-2017}
and~\cite{dsv-2021} for a~fundamental problem in copositive optimization
(see~\cite{bsu-2012},~\cite{duer-2010}), namely for obtaining a~rational certificate for
\emph{\(\cp \)-factorizations} (see~\cite{bs-2018},~\cite{gd-2020},~\cite{sdb-2018}).
The \emph{cone of completely positive matrices} is
\begin{align*}
	\cpn
	& = \cone\left\{ xx^\intercal :\ x\in\IRngeo \right\} \\
	& = \left\{ \sum_{i=1}^m \alpha_ix_ix_i^\intercal :\
			 m\in\mathbb{N},\alpha_i\in\IRgeo,x_i\in\IRngeo
		\right\}.
\end{align*}
A rational \(\cp \)-factorization of a~matrix \(Q\in\cpn \) is a~decomposition
\[
	Q = \sum_{i=1}^m \alpha_ix_ix_i^\intercal
\]
with \(\alpha_i\in\mathbb{Q} \) and \(x_i\in\IZn \).
It is still an open problem  whether every rational completely positive matrix possesses such a
factorization~\cite{bds-2015}, though every rational matrix in the interior of \(\cpn \)
can be factorized in such a~way~\cite{dsv-2017}.
However, the task to find a~rational \(\cp \)-factorization is known to be a~difficult
problem, since it gives a~certificate for \(Q\in\cpn \)~\cite{dg-2014}.

With the standard inner product
\(\left< A, B \right> = \operatorname{Trace}(AB) = \sum_{i,j=1}^n A_{ij}B_{ij} \)
on \(\sn \) we obtain the dual cone of \(\cpn \), the \emph{cone of copositive matrices}
\begin{align*}
	\copn = {(\cpn)}^*
		& = \big\{ B\in\sn :\ \left< B,A \right>\geq0 \text{ for all } A\in\cpn \big\} \\
		& = \big\{ B\in\sn :\ B[x]\geq0 \text{ for all } x\in\IRngeo \big\}.
\end{align*}
Note that also \(\cpn={(\copn)}^* \) holds.

This duality relation can be used to give a~certificate for \(Q\notin\cpn \),
by providing a~suitable copositive matrix $B$ with \(\langle B,A\rangle<0 \).
In~\cite{dsv-2021} the authors conjecture that a~suitable variant of their algorithm can find such
a matrix $B$ (a certificate for \(Q\not\in\cpn \)), or provide a~\(\cp \)-factorization of
a given rational input matrix $Q$ (a certificate for \(Q\in\cpn \)).

The basis for algorithms as developed in~\cite{dsv-2021} is a~generalization of the theory of
perfect matrices, in particular its neighborhood graph.
This graph is naturally obtained by generalizing the Ryshkov polyhedron of the classical
theory (see~\cite{schuermann-2009}).
For its definition in the cone of copositive matrices we need to generalize the arithmetical
minimum to the copositive setting:
The \emph{copositive minimum} of a~symmetric matrix $B$ is defined by
\[
	\minC B = \inf\big\{ B[v] :\ v\in\IZngeomo \big\},
\]
and the set of vectors attaining it is
\[
	\MinC B = \big\{ v\in\IZngeo :\ B[v]=\minC B \big\}.
\]
A vector \(v\in\MinC B \) is also called a~minimal vector of $B$.
If $B$ lies in the interior of the copositive cone, it is also called \emph{strictly copositive}.
For such matrices, the copositive minimum is particularly well-behaved,
as we have the following result.
\begin{lemma}[{\cite[Lemma 2.2]
{dsv-2021}}] For a~strictly copositive matrix $B$ we have \(\minC B>0 \) and \(|\MinC B|<\infty \).
\end{lemma}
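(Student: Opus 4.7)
The plan is to reduce the problem to the compactness of the standard simplex. Let $\Delta = \{x \in \mathbb{R}^n_{\geq 0} : \sum_i x_i = 1\}$. Since $B$ is strictly copositive, the function $x \mapsto B[x]$ is strictly positive on $\mathbb{R}^n_{\geq 0} \setminus \{0\}$, hence strictly positive on the compact set $\Delta$. Thus by continuity it attains a strictly positive minimum
\[
    m := \min\{B[x] : x \in \Delta\} > 0
\]
on $\Delta$.

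First I would establish $\minC B > 0$. For any $v \in \IZngeomo$, set $s = \sum_i v_i \geq 1$, so that $v/s \in \Delta$. Then
\[
    B[v] = s^2 \cdot B[v/s] \geq s^2 m \geq m,
\]
which shows $\minC B \geq m > 0$.

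For the finiteness of $\MinC B$, I would use the same inequality in reverse. If $v \in \MinC B$ with $s = \sum_i v_i$, then $B[v] = \minC B$, and combining with the bound above gives
\[
    s^2 \leq \frac{\minC B}{m}.
\]
Hence the coordinate sum $s$ of any minimal vector is uniformly bounded. Since nonnegative integer vectors with bounded coordinate sum form a finite set, $\MinC B$ is finite.

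I do not anticipate a serious obstacle: the only ingredient beyond definitions is compactness of $\Delta$ together with continuity of $x \mapsto B[x]$, and the homogeneity of degree two is what transfers the estimate on $\Delta$ back to integer vectors while simultaneously yielding the bound on $\sum_i v_i$. The argument relies crucially on strict copositivity (rather than mere copositivity) to ensure $m > 0$; without strictness one would only obtain $\minC B \geq 0$ and no control on the size of minimal vectors.
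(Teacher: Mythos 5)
Your argument is correct and is essentially the standard compactness-plus-homogeneity proof that the paper outsources to \cite[Lemma 2.2]{dsv-2021} (there one typically minimizes over the unit sphere intersected with the nonnegative orthant rather than over the simplex, an immaterial difference). The only point you use silently is that membership in the interior of \( \copn \) is equivalent to \( B[x]>0 \) for all \( x\in\IRngeo\setminus\{0\} \); this is a standard fact, but worth a one-line remark since the paper defines strict copositivity via the interior.
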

Generalizing the notion of perfectness, a~strictly copositive matrix is called
\emph{perfect copositive} if it is uniquely determined by its copositive minimum and the set of
vectors attaining it.
The \emph{copositive Ryshkov polyhedron} is defined as
\[
	\sR_\cop = \big\{ B\in\sn :\ B[v]\geq1 \text{ for all } v\in\IZngeomo \big\}
\]
and its vertices correspond to the perfect copositive matrices with copositive minimum $1$.

For every perfect copositive matrix $P$, respectively for every vertex of the copositive
Ryshkov polyhedron, its \emph{copositive Voronoi cone} is the convex cone
\[
	\mathcal{V}(P) = \cone\big\{ xx^\intercal :\ x\in\MinC P \big\}.
\]
As in the classical case, by using duality, we can deduce that the union of copositive Voronoi
cones of perfect copositive matrices gives a~polyhedral decomposition of \(\cpn \).

Traversing the vertices of the copositive Ryshkov polyhedron,
resp.\ the neighborhood graph of perfect copositive matrices,
can be done as in the classical theory.
We describe the procedure in short:
Starting with a~vertex of the copositive Ryshkov polyhedron \(\sR_\cop \),
that is, with a~perfect copositive matrix $P$ with copositive minimum \(\minC P=1 \),
we can find its \emph{contiguous (neighboring) perfect copositive matrices}
by considering generators of extreme rays of
\begin{align*}
	{(\mathcal{V}(P))}^*
    & = \left\{ B\in\sn:\ \langle B, X \rangle\geq 0 \text{ for all } X\in\mathcal{V}(P) \right\} \\
	& = \left\{ B\in\sn:\ B[x]\geq 0 \text{ for all } x\in\MinC P \right\},
\end{align*}
the dual of the copositive Voronoi cone of $P$.
Note that \(\mathcal{V}(P)\) and therefore its dual are both polyhedral cones,
that is, both are generated by finitely many extreme rays.

The matrix $Q$ is a~contiguous perfect copositive matrix of $P$, if there exist
a generator $R$ of an extreme ray of \({(\mathcal{V}(P))}^* \) and \(\lambda>0 \)
such that \(Q=P+\lambda R \) has copositive minimum $1$
and $Q$ is perfect copositive itself.
In this case, \(\minC(P+\mu R)<1 \) for all \(\mu>\lambda \), whenever \(P+\mu R \) is copositive.
The question of how to practically find \(\lambda \) and hence $Q$ is described in detail in~\cite[Section~3]{dsv-2021}.
It turns out that for some perfect copositive matrices and certain extreme ray generators $R$,
\(\minC(P+\mu R)=1 \) for all \(\mu\geq 0 \).
That is, there is no contiguous perfect copositive matrix of $P$ in the direction of $R$.
This situation does not occur in the classical theory and is an interesting new phenomenon
for the copositive case.
We show in Section~\ref{sec:polyrepr} that it can only occur for very special $R$.

It was shown in~\cite[Lemma 2.5]{dsv-2021},
that as a~possible starting perfect copositive matrix in dimension $n$ we can choose
\(\frac12 Q_{\mathsf{A}_n} \) with
\begin{equation}\label{eqn:initial_perfect_form}
    Q_{\mathsf{A}_n} =
    \begin{pmatrix}
    \phantom{-}2 & -1 & 0 & \cdots & 0 \\
    -1 & \phantom{-}2 & \ddots & \ddots & \vdots \\
    0 & \ddots & \ddots & \ddots & 0 \\
    \vdots & \ddots & \ddots & \phantom{-}2 & -1 \\
    0 & \cdots & 0 & -1 & \phantom{-}2
    \end{pmatrix}
.
\end{equation}
The matrix \(Q_{\mathsf{A}_n} \) is a~Gram matrix of the root lattice \(\mathsf{A}_n \).
Its copositive minimum \(\minC Q_{\mathsf{A}_n} = 2 \) is attained by the vectors
\[
    \MinC Q_{\mathsf{A}_n}=\left\{ \sum_{i=j}^k e_i : 1 \leq j \leq k \leq n \right\},
\]
where \(e_i \) is the $i$-th standard unit basis vector of \(\IRn \).
Note that \(Q_{\mathsf{A}_n} \) is an example of a~
classical perfect matrix that is also perfect copositive.
The reason why this works is that \(Q_{\mathsf{A}_n} \) is the Gram matrix of a~basis of
\(\mathsf{A}_n \), which is also a~\emph{fundamental system of roots} of the root system generated
by the shortest vectors of \(\mathsf{A}_n \), see~\cite[Section 1.4]{ebeling-2002}.

Generalizing this idea, one can find Gram matrices of the (perfect) root lattices
\(D_n, E_6, E_7 \) and \(E_8 \), which are perfect copositive as well.
In~\cite{martinet-2002} these bases are also called the \emph{Coxeter-Bases} of the root lattices.

In fact, whenever it is the case that for a~classical perfect matrix $P$
the set \(\Min P \) consists only of vectors \(\pm x \) with \(x\in\IZngeo \), $P$ is also
copositive perfect.
We use this in Section~\ref{sec:embedding} to show that for every classical perfect positive
definite matrix, there exists an arithmetically equivalent one which is perfect copositive.
\subsection*{Where to look for perfect copositive matrices?}
Let \(\nonn \) denote the cone of symmetric, componentwise nonnegative \(n\times n\) matrices, and
\(\sngeo \) the cone of symmetric positive semidefinite \(n\times n\) matrices.
It is known that \(\sngeo+\nonn\subseteq\copn \), with equality if \(n\leq4 \).
This gives us several ``natural components'' of \(\copn \),
where we can look for perfect copositive matrices.
In~\eqref{eqn:initial_perfect_form} we have already seen that there are positive definite matrices,
which are perfect copositive.
The same however cannot be said about nonnegative matrices (if \(n>1 \)).
\begin{lemma}\label{lem:nonnegnonperf}
    \(P\in\nonn \) is perfect copositive if and only if \(n=1 \)
\end{lemma}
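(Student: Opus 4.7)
The plan is to handle the two directions separately and to reduce the perfectness question for nonnegative $P$ to a dimension count on the linear conditions $B[v]=c$ imposed by the minimal vectors.

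For the easy direction $n=1$, any $P=(p)$ with $p>0$ is strictly copositive, its copositive minimum equals $p$, and is attained only at $v=1$; any other $1\times 1$ matrix $B$ with $B[1]=p$ must be $(p)=P$, so $P$ is perfect copositive.

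For the nontrivial direction, assume $n\geq 2$ and $P\in\nonn$ is strictly copositive. I would first argue that $P_{ii}>0$ for every $i$: since $P[e_i]=P_{ii}$ and $e_i\in\IZngeo\setminus\{0\}$, strict copositivity forces $P_{ii}>0$. Let $m=\min_i P_{ii}$, which is then positive. The key computational step is to determine $\MinC P$ exactly. For any $v\in\IZngeomo$, nonnegativity of the off-diagonal entries and of $v$ gives
\[
    P[v]=\sum_i P_{ii}v_i^2 + 2\sum_{i<j}P_{ij}v_iv_j\;\geq\; \sum_i P_{ii}v_i^2 \;\geq\; m\sum_i v_i^2.
\]
Since $v$ is a nonzero nonnegative integer vector, $\sum_i v_i^2\geq 1$ with equality if and only if $v=e_i$ for some $i$; otherwise $\sum_i v_i^2\geq 2$ and hence $P[v]\geq 2m>m$. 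Thus $\minC P=m$ and $\MinC P\subseteq\{e_i:P_{ii}=m\}$.

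Now I would finish with a linear-algebra dimension count in the spirit of Section~\ref{sec:classical}. The minimal vectors only contribute conditions of the form $B[e_i]=B_{ii}=m$; the number of independent such linear conditions on $B\in\sn$ is at most $n$. But $\dim\sn=\binom{n+1}{2}>n$ for $n\geq 2$, so no combination of these conditions pins down the off-diagonal entries of $B$, and $P$ fails to be uniquely determined by $\minC P$ together with $\MinC P$. Hence $P$ is not perfect copositive, completing the equivalence.

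The only place where one has to be a little careful is the sharp description of $\MinC P$, in particular ruling out minimal vectors supported on more than one coordinate or scaled beyond $e_i$; the displayed inequality above does this uniformly via the integrality and nonnegativity of $v$. Everything else is a clean dimension argument, so I do not expect any serious obstacle.
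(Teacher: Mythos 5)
Your proof is correct and follows essentially the same route as the paper's: you pin down $\MinC P$ as a subset of the standard basis vectors via the nonnegativity of $P$ and $v$, and then conclude non-perfectness for $n\geq 2$ from the count $n<\binom{n+1}{2}$. The only cosmetic difference is that you bound $P[v]\geq m\sum_i v_i^2$ globally, while the paper compares $P[v]$ to a single diagonal entry $p_{ii}$; both yield the same description of the minimal vectors.
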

\begin{proof}
    If \(n=1 \), the only perfect copositive matrix (up to scaling) is \((1) \in\mathcal{N}^1 \).
    On the other hand, let \(P=(p_{ij}) \).
    For any vector \(x\in\IZngeo\setminus\{0\} \) with \(x_i\neq0 \) we have
    \[
        P[x]=\sum_{i,j=1}^{n}p_{ij}x_ix_j\geq p_{ii}x_i^2 \geq p_{ii}=P[e_i],
    \]
    with strict inequality in the first part, if at least two entries of $x$ are nonzero
    and in the second part, if \(x_i \geq 2 \).
    Hence, we see that
    \[
        \MinC P=\left\{ e_i :\ p_{ii}=\min_{j=1,\dots,n}p_{jj} \right\}.
    \]
    Therefore \(\MinC P \) contains at most $n$ elements, but \(n<\binom{n+1}{2} \),
	whenever \(n\geq2 \).
	So $P$ cannot be perfect if \(n > 1 \).
\end{proof}
The remaining components of \(\copn \) we look at are \(\bd\sngeo \),
 \((\sngeo+\nonn)\setminus(\sngeo\cup\nonn) \) and \(\copn\setminus(\sngeo+\nonn) \).
In Section~\ref{sec:newphenomenaN3} we show that in all of these components there are perfect
copositive matrices.

\subsection*{Symmetries}
In the previous section we discussed a~notion of equivalence of classical perfect matrices,
induced by \(\glnz \).
It turns out that the (infinite) group \(\glnz \) is the symmetry group of the Ryshkov polyhedron.
Here, by symmetry group of a~subset $S$ of \(\sn \) we mean the set
\[
    \operatorname{Sym}(S)=\left\{ U\in\glnr: U^\intercal SU=S \right\}.
\]
In the copositive case it is known (see~\cite{sst-2013},~\cite{shitov-2021}) that the
symmetry group of \(\copn \) is the group of nonnegative generalized permutation matrices,
i.e.\ the set of all \(n\times n\) matrices of the form \(PD \) with a~permutation matrix $P$ and
a diagonal matrix $D$ with positive diagonal.
This leaves us with just a~finite group acting on the perfect copositive matrices and on \(\sR_\cop \).
\begin{lemma}\label{lem:symmetrygroup}
	The symmetry group of the copositive Ryshkov polyhedron is
	
\[
		\operatorname{Sym}(\sR_\cop)=\operatorname{Sym}(\copn)\cap\glnz\cong S_n,
\]
	with \(S_n \) being the symmetric group of order $n$.
	In other words, the symmetries of \(\sR_\cop \) are the \(n\times n\) permutation matrices.
\end{lemma}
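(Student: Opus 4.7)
The plan is to show that $\symgrp(\sR_\cop)$ equals the set of $n\times n$ permutation matrices, which simultaneously yields both the equality $\symgrp(\sR_\cop) = \symgrp(\copn)\cap\glnz$ and the isomorphism with $S_n$. Indeed, a permutation matrix lies in both $\glnz$ and $\symgrp(\copn)$, while conversely the cited factorization $U = PD$ of any element of $\symgrp(\copn)$ forces the positive diagonal $D$, if additionally in $\glnz$, to have integer entries whose reciprocals are also integer, hence $D = I$ and $U = P$.

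The inclusion ``permutation matrices $\subseteq \symgrp(\sR_\cop)$'' is immediate: a permutation matrix $U$ bijects $\IZngeomo$ with itself, so $(U^\intercal B U)[v] = B[Uv] \geq 1$ for every $v \in \IZngeomo$ whenever $B \in \sR_\cop$, and the analogous statement for $U^{-1} = U^\intercal$ shows $U \in \symgrp(\sR_\cop)$.

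For the converse, let $U \in \symgrp(\sR_\cop)$. First I would observe that the recession cone of $\sR_\cop$ is exactly $\copn$: a matrix $C$ keeps $\sR_\cop$ stable under the addition $B \mapsto B + tC$ with $t \geq 0$ iff $C[v] \geq 0$ for all $v \in \IZngeomo$, which by density of the rational nonnegative vectors in $\IRngeo$ and continuity is equivalent to $C \in \copn$. Since linear bijections preserve recession cones, $U \in \symgrp(\copn)$, and the cited result yields a factorization $U = PD$ with $P$ a permutation matrix, say $Pe_i = e_{\pi(i)}$, and $D = \operatorname{diag}(d_1, \ldots, d_n)$ with all $d_i > 0$.

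The main obstacle is to show $D = I$. I would exploit the specific vertex $\tfrac{1}{2}Q_{\mathsf{A}_n} \in \sR_\cop$, whose diagonal entries all equal $1$. Setting $V = U^\intercal (\tfrac{1}{2}Q_{\mathsf{A}_n}) U$, the membership $V \in \sR_\cop$ forces $V[e_i] \geq 1$ for each $i$; unpacking, $V[e_i] = (\tfrac{1}{2}Q_{\mathsf{A}_n})[Ue_i] = d_i^{\,2} \cdot (\tfrac{1}{2}Q_{\mathsf{A}_n})_{\pi(i),\pi(i)} = d_i^{\,2}$, so $d_i \geq 1$ for all $i$. Applying the same reasoning to $U^{-1} \in \symgrp(\sR_\cop)$, which factors as $P^{-1} \cdot \operatorname{diag}(d_{\pi^{-1}(1)}^{-1}, \ldots, d_{\pi^{-1}(n)}^{-1})$, gives the reversed inequalities $d_{\pi^{-1}(i)}^{-1} \geq 1$, i.e.\ $d_j \leq 1$ for all $j$. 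Combining, $d_j = 1$ for all $j$, and hence $U = P$ is a permutation matrix, completing the proof.
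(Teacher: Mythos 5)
Your proof is correct and follows essentially the same route as the paper's: both reduce to the known classification of \( \symgrp(\copn) \) as the nonnegative generalized permutation matrices \( U=PD \) and then argue \( D=\Id \). The only differences are in how the two supporting steps are justified --- the paper passes to \( \symgrp(\copn) \) via the identity \( \interior\copn=\cone\sR_\cop\setminus\{0\} \) where you use the (equally valid) recession-cone invariance, and the paper disposes of \( D \) with the terse remark that \( U \) must act on \( \IZngeo \), where you give a cleaner explicit computation at the vertex \( \frac12 Q_{\mathsf{A}_n} \) and its inverse.
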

\begin{proof}
    The inclusion \(\supseteq \) follows from a~straightforward calculation.
    For the inclusion \(\subseteq \), note that~\cite[Section 2.2]{dsv-2021}
    \[
        \interior\copn = \cone\sR_\cop\setminus\{0\}.
    \]
    Thus
    \[
        \operatorname{Sym}(\sR_\cop)\subseteq
        \operatorname{Sym}(\cone\sR_\cop\setminus\{0\})=
		\operatorname{Sym}(\interior\copn)\subseteq
        \operatorname{Sym}(\copn).
    \]
    Therefore, every \(U \in\operatorname{Sym}(\sR_\cop) \) has the form \(U=PD \)
    for a~permutation matrix $P$ and a~diagonal matrix $D$ as above.
    Since such a matrix~$U$ has to act on \(\IZngeo \), $D$ is equal to the identity matrix,
    giving us the desired result.
\end{proof}
Just as in the classic theory, we can now introduce a~notion of equivalence of two perfect
copositive matrices.
Here we say that the perfect copositive matrices \(Q_1 \) and \(Q_2 \) are equivalent
(in the copositive sense) if there exists a~permutation matrix $P$,
such that \(Q_2 = P^\intercal Q_1P \).
Note that, in contrast to the classic theory, there are infinitely many copositive perfect matrices,
even up to equivalence.
Thus we cannot hope for a~Voronoi-type result in the copositive case.

\section{The case \texorpdfstring{\(n=2 \)}{n=2}}\label{sec:nequal2}

While the structure of the copositive cone is very difficult for general $n$, for $n=2$,
it is comparatively simple, as \(\coptwo=\stwo_{\geq0}\cup\mathcal{N}^2 \), see~\cite{py-1993}.
Together with Lemma~\ref{lem:nonnegnonperf} this gives us a~classification of \(2\times 2 \)
perfect copositive matrices.
\begin{theorem}\label{thm:characterizeN2}
	Perfect copositive \(2 \times 2 \) matrices are precisely the classical perfect matrices,
    having negative nondiagonal entries.
\end{theorem}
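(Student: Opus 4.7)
The plan is to exploit the very simple structure \( \coptwo=\stwo_{\geq0}\cup\nonn \) together with Lemma~\ref{lem:nonnegnonperf} to pin down the location of any perfect copositive \(2\times 2\) matrix, and then to show that on that location the copositive and classical minimum sets differ only by signs, so the two notions of perfectness coincide.

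First I would argue the forward direction. Let \(B\) be perfect copositive. By definition \(B\) is strictly copositive, hence in the interior of \(\coptwo\). By Lemma~\ref{lem:nonnegnonperf}, \(B\notin\nonn\), so using \(\coptwo=\stwo_{\geq0}\cup\nonn\) the matrix \(B\) must lie in \(\stwo_{\geq0}\setminus\nonn\); in particular, its unique nondiagonal entry \(b\) is strictly negative. Combining \(b<0\) with strict copositivity (\(a>0\), \(c>0\), \(b>-\sqrt{ac}\)) yields \(\det B>0\), so \(B\) is in fact positive definite.

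Next I would show that for such a positive definite \(B\) with negative off-diagonal entry, a minimal vector \(v=(x,y)\in\MinC B\) must have nonnegative components and, symmetrically, any classical minimal vector can be replaced by one with nonnegative components with the same value of \(B[\cdot]\). The key observation is: if \(v=(x,y)\in\IZ^2\) has \(xy<0\), then replacing \(v\) by \((|x|,|y|)\) changes only the cross term \(2bxy\), and since \(b<0\) and \(xy<0\) we have \(2bxy>0\) while \(2b|xy|<0\), so \(B[(|x|,|y|)]<B[v]\) strictly. This yields \(\min B=\minC B\) and \(\Min B=\MinC B\cup(-\MinC B)\). Because \(vv^\intercal=(-v)(-v)^\intercal\), the rank-\(1\) matrices \(\{vv^\intercal:v\in\Min B\}\) coincide with \(\{vv^\intercal:v\in\MinC B\}\); hence perfectness in the classical sense and in the copositive sense impose the very same linear system on \(\stwo\). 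It follows that \(B\) is classical perfect.

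For the reverse direction, the same calculation applies: a classical perfect matrix \(B\) with negative off-diagonal entry is automatically positive definite (in dimension \(2\), being classical perfect already forces positive definiteness) and therefore strictly copositive, and the identity \(\Min B=\pm\MinC B\) just derived shows it is perfect copositive. The main subtlety, and the only place where anything nontrivial happens, is the comparison of \(\Min B\) and \(\MinC B\) via the sign argument on the cross term; everything else is bookkeeping using Lemma~\ref{lem:nonnegnonperf} and the classification \(\coptwo=\stwo_{\geq0}\cup\nonn\).
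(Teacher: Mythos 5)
Your proof is correct and follows exactly the route the paper intends: the decomposition \( \coptwo=\stwo_{\geq0}\cup\nonn \) together with Lemma~\ref{lem:nonnegnonperf} to force a negative off-diagonal entry and positive definiteness, then the cross-term sign argument to identify \( \Min B \) with \( \pm\MinC B \) and hence the two perfectness conditions. The paper omits these details entirely, so your write-up simply supplies the verification it leaves to the reader.
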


Another advantage for \(n=2 \) is that we can visualize parts of the $3$-dimensional
Ryshkov-polyhedron to get a~clear impression of what is going on.
A standard way of vizualization is a~projection to a~plane of constant trace
(see Figure~\ref{fig:rcop-2}). Here, the vertex \(\frac12 Q_{\mathsf{A}_2} \)
of the Ryshkov polyhedron shows an essential difference between the classical
case and the copositive Ryshkov polyhedron.
It is connected to two contiguous classical perfect matrices, which are also
perfect copositive, but it is also connected to an extreme ray
\begin{equation}\label{eqn:extreme_rayn2}
    \left\{ \frac12 Q_{\mathsf{A}_2}+\lambda\cdot E_{12} :\ \lambda\geq0 \right\}
\end{equation}
of the Ryshkov polyhedron, where
\(E_{12}=
\begin{pmatrix}
0 & 1 \\ 1 & 0
\end{pmatrix}
\).
By the results obtained in Section~\ref{sec:polyrepr} this is the only extreme ray for \(n=2 \)
and the three-dimensional Ryshkov polyhedron is equal to
\begin{align}\label{eq:rcop-2-poly}
	\sR_\cop
	&=\conv\big\{ P\in\coptwo :\ P \text{ perfect copositive, } \minC P=1 \big\} \\
	&\quad +\cone\big\{ E_{12} \big\} \nonumber
\end{align}
Such extreme rays do not exist for the classical Ryshkov polyhedron.
There is in fact a~contiguous classical perfect matrix on
the ray~\eqref{eqn:extreme_rayn2} for \(\lambda=1 \), that is, the matrix
\(\frac12 \cdot
\begin{pmatrix}
2 & 1 \\ 1 & 2
\end{pmatrix}
\).
It is -- as are all vertices of the classical Ryshkov polyhedron for \(n=2 \) --
arithmetically equivalent to
\(\frac12 Q_{\mathsf{A}_2} \).
This can for instance be seen by the relation
\(
\begin{pmatrix}
2 & 1 \\ 1 & 2
\end{pmatrix}
=U^\intercal Q_{\mathsf{A}_2}U \)
for the unimodular matrix
\(U=
\begin{pmatrix}
1 & 0 \\ 0 & -1
\end{pmatrix}
\).

In fact, the linear map \(Q\mapsto U^\intercal QU \),
with $U$ as above, maps every perfect copositive matrix $Q$ to a~
perfect positive definite, but nonnegative matrix and vice versa.
So in some sense ``exactly half'' of the classical perfect matrices
are perfect copositive, visible in Figure~\ref{fig:rcop-2} as a~mirror symmetry
along the $x$-axes of grey and black parts of the classical Ryshkov polyehdron.
The other symmetry visible in Figure~\ref{fig:rcop-2} (mirror along the $y$-axis)
is in a~similar way given by the linear map \(Q\mapsto U^\intercal QU \),
but with \(U=
\begin{pmatrix}
0 & 1 \\ 1 & 0
\end{pmatrix}
\),
permuting coordinates of minimal vectors.
Note that this leaves the vertex \(\frac12 Q_{\mathsf{A}_2} \) as well as
the attached extreme ray invariant.

\begin{figure}
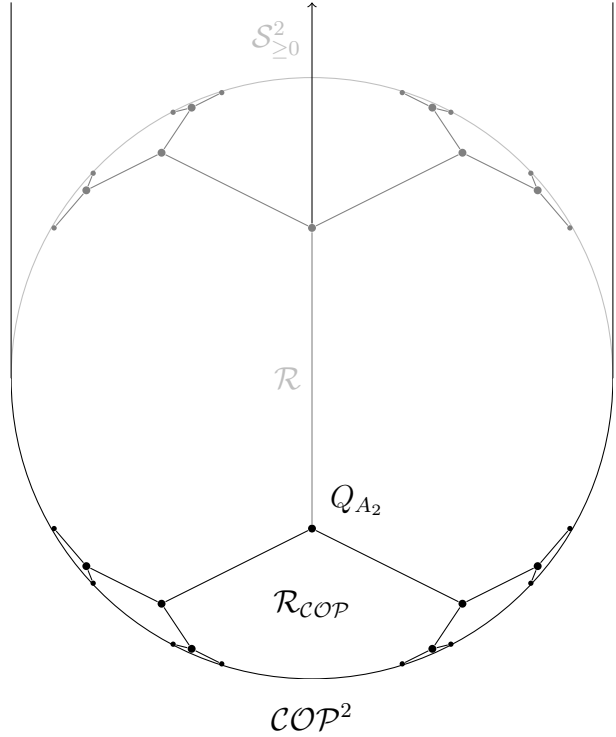

	\centering
	\includestandalone{ryshkov_2dim}
	\caption{The copositive (black) and classical (black and grey) Ryshkov polyhedron for the case
		 \(n=2 \), projected onto the hyperplane \(\operatorname{Trace}Q=2 \).
    	Cartesian axes are \(x=q_{11}-q_{22} \) and \(y=q_{12} \) for \(Q=(q_{ij})\in\stwo \).
	}%
	\label{fig:rcop-2}
\end{figure}

We close this section by describing how understanding contiguous perfect matrices and traversing
the neighborhood graph is possible by just looking at the changes of sets of minimal vectors.
Considering the matrix \(Q_{\mathsf{A}_2} \) for instance we have
\[
    \MinC Q_{\mathsf{A}_2} =
    \left\{
    \begin{pmatrix}
1 \\ 0
\end{pmatrix}
,
    \begin{pmatrix}
0 \\ 1
\end{pmatrix}
,
    \begin{pmatrix}
1 \\ 1
\end{pmatrix}
    \right\}.
\]
These three vectors define \(Q_{\mathsf{A}_2} \) uniquely.
By removing one of the vectors, we obtain a~ray $R$ so that
\(Q=Q_{\mathsf{A}_2}+\lambda R \), for small enough \(\lambda>0 \),
has \(\MinC Q \) consisting of the two remaining vectors.
As a~special case, the set
\(\left\{
\begin{pmatrix}
1 \\ 0
\end{pmatrix}
,
\begin{pmatrix}
0 \\ 1
\end{pmatrix}
\right\} \)
is attained by \(Q=Q_{\mathsf{A}_2}+\lambda E_{12} \) for all \(\lambda>0 \).
For the other two choices of removing one vector from \(\MinC Q_{\mathsf{A}_2} \),
the chosen vector is replaced by another one to obtain a~contiguous perfect copositive matrix.
For instance,
\[
    \text{ replacing }
\begin{pmatrix}
1 \\ 0
\end{pmatrix}
\text{ with }
    \begin{pmatrix}
1 \\ 2
\end{pmatrix}
    \text{ yields the perfect neighbor }
    \begin{pmatrix}
        6 & -3\\
        -3 & 2
    \end{pmatrix}
,
\]
\[
    \text{and replacing }
    \begin{pmatrix}
0 \\ 1
\end{pmatrix}
\text{ with }
    \begin{pmatrix}
2 \\ 1
\end{pmatrix}
    \text{ yields the perfect neighbor }
    \begin{pmatrix}
        2 & -3\\
        -3 & 6
    \end{pmatrix}
.
\]
In fact, for a~perfect copositive matrix $P$ with
\[
    \MinC P=\left\{
    \begin{pmatrix}
a~\\ b
\end{pmatrix}
,
    \begin{pmatrix}
c \\ d
\end{pmatrix}
,
    \begin{pmatrix}
e \\ f
\end{pmatrix}
    \right\}
\]
by replacing the vector \({(e, f)}^\intercal \), we obtain the contiguous perfect matrix $N$ with
\[
    \MinC N=\left\{
    \begin{pmatrix}
a~\\ b
\end{pmatrix}
,
    \begin{pmatrix}
c \\ d
\end{pmatrix}
,
    \begin{pmatrix}
a+c \\ b+d
\end{pmatrix}
    \right\}\neq\MinC P.
\]
We refer to~\cite{dsv-2021} for details and for connections to
approximations of irrationals by rational numbers using continued fractions.

\section{New Phenomena for \texorpdfstring{\(n \geq 3 \)}{n>=3}}\label{sec:newphenomenaN3}

A simple characterization, just as in Theorem~\ref{thm:characterizeN2} for \(n=2 \),
is no longer true for \(n \geq 3 \).
In fact, we find several new phenomena which do not occur for classical perfect matrices.
In particular, we will see that for \(n\geq 3 \) there are perfect copositive matrices in
all the remaining components of \(\copn \) mentioned in Section~\ref{sec:copositive}.

Let us start by looking at the neighborhood graph for \(n=3 \), starting with~\(Q_{\mathsf{A}_3} \).
We have
\[
    \MinC (Q_{\mathsf{A}_3})=
    \left\{
    \begin{pmatrix}
1 \\ 0 \\ 0
\end{pmatrix}
,
    \begin{pmatrix}
0 \\ 1 \\ 0
\end{pmatrix}
,
    \begin{pmatrix}
0 \\ 0 \\ 1
\end{pmatrix}
,
    \begin{pmatrix}
0 \\ 1 \\ 1
\end{pmatrix}
,
    \begin{pmatrix}
1 \\ 1 \\ 0
\end{pmatrix}
,
    \begin{pmatrix}
1 \\ 1 \\ 1
\end{pmatrix}
    \right\}
\]
and every choice of one vector to be replaced leads to a~
contiguous perfect copositive matrix or an extreme ray of the copositive Ryshkov polyhedron.
In fact, removing the last vector (consisting of 1 in each coordinate) yields the extreme ray
\[
    \left\{ \frac12 Q_{\mathsf{A}_3}+\lambda\cdot E_{13} :\ \lambda\geq0 \right\},
\]
where \(E_{13} =
\begin{pmatrix}
0 & 0 & 1 \\ 0 & 0 & 0 \\ 1 & 0 & 0
\end{pmatrix}
\).
All other choices of removing one vector from \(\MinC Q_{\mathsf{A}_3} \) lead to a~contiguous
copositive perfect matrix.

Two of them, (replacing \({(0,1,1)}^\intercal \) or \({(1,1,0)}^\intercal\) by \({(1,0,1)}^\intercal \))
\[
    \begin{pmatrix}
2 & -1 & -1 \\ -1 & 2 & 0 \\ -1 & 0 & 2
\end{pmatrix}
    \quad\mbox{and}\quad
    \begin{pmatrix}
2 & 0 & -1 \\ 0 & 2 & -1 \\ -1 & -1 & 2
\end{pmatrix}
,
\]
are equivalent to \(Q_{\mathsf{A}_3} \) (in the copositive sense).
Hence these two neighbors have, up to a~coordinate permutation,
the same neighborhood as \(Q_{\mathsf{A}_3} \).

Two other neighbors of \(Q_{\mathsf{A}_3} \)
(replacing \(e_1 \) or \(e_3 \) by \({(1,2,1)}^\intercal \)) are
\[
    \begin{pmatrix}
4 & -2 & 0 \\ -2 & 2 & -1 \\ 0 & -1 & 2
\end{pmatrix}
    \quad\mbox{and}\quad
    \begin{pmatrix}
2 & -1 & 0 \\ -1 & 2 & -2 \\ 0 & -2 & 4
\end{pmatrix}
,
\]
which are also classical perfect matrices, so they are also arithmetically equivalent to
\(Q_{\mathsf{A}_3} \), but their neighborhood with respect to the copositive Ryshkov polyhedron
is different from that of \(Q_{\mathsf{A}_3} \).
In fact, their neighborhoods consist entirely of classical perfect matrices.
The remaining fifth contiguous copositive perfect matrix \(P_1 \) of \(Q_{\mathsf{A}_3} \)
reveals a~new phenomenon not observable for \(n=2 \): The matrix \(P_1 \) is positive semidefinite,
but not positive definite, as it has rank $2$.
It is part of a~series of such perfect copositive matrices which have a~growing number of
minimal vectors -- another phenomenon not observable for \(n=2 \):
For \(k \geq 1 \) let \(\alpha=k^2 +k+1 \), \(\beta=2k+1 \)
and
\[
	P_k =
\begin{pmatrix}
		2 & -\beta & 2 \\
		-\beta & 2\alpha & -\beta \\
		2 & -\beta & 2
\end{pmatrix}
.
\]
The matrices \(P_k \) are all positive semidefinite and have rank $2$.
We shall see that they can be constructed from the \(2 \times 2 \) perfect copositive matrices
\[
    \begin{pmatrix}
2\alpha & -\beta \\ -\beta & 2
\end{pmatrix}
\]
with minimal vectors
\(
\begin{pmatrix}
0 \\ 1
\end{pmatrix}
\),
\(
\begin{pmatrix}
1 \\ k
\end{pmatrix}
\) and
\(
\begin{pmatrix}
1 \\ k+1
\end{pmatrix}
\)
in the following way.
\begin{lemma}\label{lem:lifting}
	Let \(Q =
\begin{pmatrix}
M & m \\ m^\intercal & \mu
\end{pmatrix}
\in \copn \) be perfect
    copositive with \(M\in\mathcal{COP}^{n-1} \), \(m\in\IR^{n-1} \) and \(\mu\in\IRgeo \).
	If there is an \(x \in \MinC(Q) \) with \(x_n \geq 2 \), then the matrix
	
\[
		\widetilde{Q} =
\begin{pmatrix}
							M & m & m \\
							m^\intercal & \mu & \mu \\
							m^\intercal & \mu & \mu \\
						
\end{pmatrix}
\in\cop^{n+1}
\]
	is perfect copositive.
\end{lemma}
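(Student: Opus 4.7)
\emph{Approach.} The lemma rests on the identity
\[
    \widetilde Q[y] \;=\; Q[\phi(y)], \qquad \phi(y) := (y_1,\ldots,y_{n-1},\,y_n+y_{n+1})^\intercal,
\]
which is immediate from the repeated-row structure of $\widetilde Q$. Since $\phi$ maps $\IR_{\geq 0}^{n+1}\setminus\{0\}$ surjectively onto $\IR_{\geq 0}^n\setminus\{0\}$, one reads off at once that $\widetilde Q$ is strictly copositive, that $\minC \widetilde Q = \minC Q$, and that $\MinC \widetilde Q$ is the finite preimage of $\MinC Q$ under $\phi$ inside $\IZ_{\geq 0}^{n+1}$; concretely, each $v=(v_1,\ldots,v_n)\in\MinC Q$ contributes the $v_n+1$ minimal vectors $(v_1,\ldots,v_{n-1},j,v_n-j)^\intercal$ for $j=0,\ldots,v_n$.

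\emph{Reduction to two slices.} To prove perfectness, I would show that the only $D\in\mathcal{S}^{n+1}$ with $D[y]=0$ for every $y\in\MinC\widetilde Q$ is $D=0$. Writing
\[
    D=\begin{pmatrix} A & b & c \\ b^\intercal & \alpha & \gamma \\ c^\intercal & \gamma & \beta \end{pmatrix},
\]
with $A\in\mathcal{S}^{n-1}$, $b,c\in\IR^{n-1}$ and $\alpha,\beta,\gamma\in\IR$, I would test this against the two extremal preimages $(v',v_n,0)^\intercal$ and $(v',0,v_n)^\intercal$ of each $v=(v',v_n)\in\MinC Q$. Both lie in $\MinC \widetilde Q$ and yield $D_1[v]=0$ and $D_2[v]=0$, where
\[
    D_1=\begin{pmatrix} A & b \\ b^\intercal & \alpha \end{pmatrix},\qquad
    D_2=\begin{pmatrix} A & c \\ c^\intercal & \beta \end{pmatrix}
\]
are the two $n\times n$ principal submatrices of $D$ obtained by dropping, respectively, the last and the second-to-last row and column. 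Perfectness of $Q$ -- the fact that $\{vv^\intercal : v\in\MinC Q\}$ spans $\sn$ -- then forces $D_1=D_2=0$, so that $A=0$, $b=c=0$, and $\alpha=\beta=0$. Only the single entry $\gamma=D_{n,n+1}$ remains to be eliminated.

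\emph{Where the hypothesis $x_n\geq 2$ enters.} This is the main obstacle of the proof: without a minimal vector that actually mixes the $n$-th and $(n+1)$-th coordinates of $\widetilde Q$, nothing in $\MinC \widetilde Q$ can constrain $\gamma$. Given $x\in\MinC Q$ with $x_n\geq 2$, the vector $y^*:=(x_1,\ldots,x_{n-1},1,x_n-1)^\intercal$ has nonnegative integer entries and $\phi(y^*)=x$, so $y^*\in\MinC \widetilde Q$. With every other entry of $D$ already known to vanish, one computes $D[y^*]=2\gamma(x_n-1)$, and $x_n-1\geq 1$ then forces $\gamma=0$. Hence $D=0$, and $\widetilde Q$ is perfect copositive.
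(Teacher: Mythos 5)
Your proof is correct. The setup --- the identity \( \widetilde{Q}[y]=Q[\phi(y)] \) with \( \phi(y)=(y_1,\dots,y_{n-1},y_n+y_{n+1})^\intercal \) and the resulting description of \( \MinC\widetilde{Q} \) as the fibre of \( \MinC Q \) under \( \phi \) --- is exactly the paper's. The perfectness step, however, is handled by a genuinely different route. The paper invokes Martinet's hyperplane criterion (Proposition~\ref{prop:perfectinhyperplane}): the minimal vectors \( (x_1,\dots,x_n,0)^\intercal \), \( x\in\MinC Q \), lie in the hyperplane \( \{y_{n+1}=0\} \) and their rank-one matrices already span a \( \binom{n+1}{2} \)-dimensional space, so perfectness of \( \widetilde{Q} \) reduces to the remaining minimal vectors spanning \( \IR^{n+1} \), which is where \( x_n\geq2 \) enters. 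You instead compute the annihilator of \( \{yy^\intercal:\ y\in\MinC\widetilde{Q}\} \) directly: the two extremal lifts \( (v',v_n,0)^\intercal \) and \( (v',0,v_n)^\intercal \) of each \( v\in\MinC Q \) reduce the problem, via perfectness of \( Q \) applied to the two principal \( n\times n \) slices of \( D \), to the single entry \( \gamma=D_{n,n+1} \), and the mixed lift \( y^*=(x_1,\dots,x_{n-1},1,x_n-1)^\intercal \) supplied by the hypothesis \( x_n\geq2 \) gives \( D[y^*]=2\gamma(x_n-1)=0 \), killing \( \gamma \). Your argument is self-contained (no external criterion needed) and makes transparent exactly which degree of freedom the hypothesis is responsible for eliminating; the paper's route is shorter on the page but outsources the linear algebra to the cited proposition.
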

Note that the matrices \(P^\intercal\widetilde{Q}P \), for a~permutation matrix $P$,
are perfect copositive as well.
Thus applying Lemma~\ref{lem:lifting} to the above \(2\times2 \) matrices and multiplying with a~
suitable permutation matrix as above yields the matrices \(P_k \).

For the proof we need a~result from the classical theory, applied to strictly copositive matrices:
\begin{proposition}[{\cite[Proposition 3.5.3]
{martinet-2002}}]\label{prop:perfectinhyperplane}
    Let \(Q\in\interior\copn \).
    If there is a~hyperplane \(H\subset \IRn \), such that
	\(\operatorname{rank}\{ xx^\intercal :\ x\in\MinC Q \cap H \}=\binom{n}{2} \),
	then $Q$ is perfect copositive if and only if \(\MinC Q \setminus H \) spans \(\IRn \).
\end{proposition}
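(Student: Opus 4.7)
The plan is to translate perfectness into a span condition on rank-$1$ matrices and then use the hyperplane $H$ to split $\sn$ into two summands that can be analyzed separately.

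First, recall that since $Q \in \interior\copn$, the set $\MinC Q$ is finite and $\minC Q > 0$ by the preceding lemma. By definition, $Q$ is perfect copositive iff the linear system $\langle Q', xx^\intercal\rangle = \minC Q$, indexed by $x \in \MinC Q$, has $Q$ as its unique solution in $\sn$. Equivalently, the rank-$1$ matrices $\{xx^\intercal : x \in \MinC Q\}$ must span all of $\sn$, which has dimension $\binom{n+1}{2}$.

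Next, choose a basis $v_1,\dots,v_n$ of $\IRn$ with $v_1,\dots,v_{n-1}$ spanning $H$, and set $V_H = \operatorname{span}_\IR\{xx^\intercal : x \in H\}$. In this basis $V_H$ consists exactly of the symmetric matrices whose last row and column vanish, giving $\dim V_H = \binom{n}{2}$ and a complementary subspace $W$ of dimension $n$ consisting of symmetric matrices supported only on the last row and column, so that $\sn = V_H \oplus W$. The rank hypothesis says precisely that $\{xx^\intercal : x \in \MinC Q \cap H\}$ already spans $V_H$. Therefore $\{xx^\intercal : x \in \MinC Q\}$ spans $\sn$ iff the images of $\{xx^\intercal : x \in \MinC Q \setminus H\}$ in the quotient $\sn/V_H \cong W$ span $W$.

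The remaining computation is the key identification: writing $x = \sum_{i=1}^n c_i v_i$, the projection of $xx^\intercal$ onto $W$ picks out the last row and column of $xx^\intercal$, which under the natural identification $W \cong \IRn$ corresponds to the vector $c_n \cdot (c_1,\dots,c_n) = c_n x$. For $x \notin H$ we have $c_n \neq 0$, so each such projection is a nonzero scalar multiple of $x$. Thus the projections span $W$ iff the vectors in $\MinC Q \setminus H$ span $\IRn$, and combining this with the previous equivalence yields the proposition.

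The only genuine step is the identification in the last paragraph: once one sees that the $W$-component of $xx^\intercal$ is essentially $x$ itself (up to a nonzero scalar that is killed by taking the span), the argument is pure linear algebra, and the copositive setting enters only through finiteness of $\MinC Q$. No subtler interaction with the cone structure is needed, which is why the classical statement from Martinet transfers verbatim.
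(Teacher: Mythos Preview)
Your argument is correct. The paper does not supply its own proof of this proposition; it is quoted verbatim from Martinet (Proposition~3.5.3) with the obvious change that $\Min$ is replaced by $\MinC$, and is then used as a black box in the proof of the lifting lemma. Your linear-algebra argument (splitting $\sn$ as $V_H\oplus W$ and identifying the $W$-component of $xx^\intercal$ with a nonzero multiple of the coordinate vector of $x$) is exactly the standard route to this result and matches what one finds in Martinet's book, so there is nothing to compare against within the paper itself.

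One cosmetic point: when you write ``$c_n\cdot(c_1,\dots,c_n)=c_n x$'' you are tacitly identifying $x$ with its coordinate vector in the basis $v_1,\dots,v_n$; since the change-of-basis map is a linear isomorphism this does not affect the spanning conclusion, but it would read more cleanly to keep the coordinate vector $c$ and $x$ notationally separate and then invoke the isomorphism at the end.
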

\begin{proof}[Proof of Lemma~\ref{lem:lifting}]
	To begin with, we represent the minimal vectors of \(\widetilde{Q} \) in terms of the minimal
	vectors of $Q$.
	Let \(x\in\IR^{n+1} \) and let \(\hat{x}\in\IR^{n-1} \) be the vector made up of the first
	\(n-1 \) entries of $x$.
	Then we have
	
\[
	 \widetilde{Q}[x] = M[\hat{x}] + 2(x_n+x_{n+1})m^\intercal\hat{x} + {(x_n+x_{n+1})}^2 \mu
		= Q\left[
\begin{pmatrix}
\hat{x} \\ x_n + x_{n+1}
\end{pmatrix}
\right].
\]
	Therefore
	
\begin{align*}
		& \MinC(\widetilde{Q}) = \\
			& \big\{
			 {(x_1,\dots, x_{n-1}, a, b)}^\intercal :\
			 x\in\MinC(Q),\ a,b\in\IZ_{\geq0} \text{ and } a+b=x_n
			\big\}.
\end{align*}
    Note that the set of rank-$1$-matrices from the subset
    \[
        \big\{ {(x_1, \dots, x_n, 0)}^\intercal :\ x\in\MinC Q \big\}
    \]
    has dimension \(\binom{n+1}{2} \), since $Q$ is perfect copostive.
    Hence, with Proposition~\ref{prop:perfectinhyperplane}, \(\widetilde{Q} \) is perfect
    copositive if the remaining minimal vectors span \(\IR^{n+1} \).
	But this is exactly the case when $Q$ has a~minimal vector $x$ with \(x_n \geq 2 \), since
	then \(\widetilde{Q} \) has at least one minimal vector of the form
	\({(x_1, \dots, x_{n-1}, a, b)}^\intercal \) with \(a, b \neq 0 \).
	Furthermore, \(\widetilde{Q} \) has at least \(\binom{n+1}{2} \) minimal vectors of the form
	\({(x_1, \dots, x_{n-1}, 0, x_n)}^\intercal \).
	Both types are not contained in the hyperplane \(\{ x_{n+1} = 0 \} \) and together span
	\(\IR^{n+1} \).
\end{proof}

Besides the perfect copositiveness of the matrices of the form \(P_k \), we also obtain the following results.
\begin{corollary}\label{cor:pkresults}
	The following conditions hold:
\begin{enumerate}
		\item\label{pkresultsone} \(\minC P_k=2 \).
		\item\label{pkresultstwo} \(\MinC P_k=
                \left(\left\{ e_1,
\begin{pmatrix}
k \\ 1 \\ 0
\end{pmatrix}
,
\begin{pmatrix}
k + 1 \\ 1 \\ 0
\end{pmatrix}
\right\}+
                 \left\langle
\begin{pmatrix}
-1 \\ 0 \\ 1
\end{pmatrix}
\right\rangle\right)\cap\IZngeo
               \)
               and in addition, \\ \(\left\vert \MinC P_k \right\vert=2k+5 \).
		\item\label{pkresultsthree} \(P_k \) and \(P_{k+1} \) are contiguous copositive perfect matrices.
	
\end{enumerate}
\end{corollary}
\begin{proof}
	Looking at the proof of Lemma~\ref{lem:lifting}, we see that the lifting operation preserves
	the copositive minimum, i.e.\ \(\minC Q = \minC\widetilde{Q} \), hence \eqref{pkresultsone} follows.
	For \eqref{pkresultstwo}, we see that the minimal vectors of \(\widetilde{Q} \) are of the form
	
\[
		{(x_1, \dots, x_{n-1}, x_n-l, l)}^\intercal \text{ for } l=0,\dots,x_n
\]
	with \(x\in\MinC Q \).
	But this results exactly in the above representation, when plugging in the minimal vectors
	of the \(2\times2 \) matrices.
	A straightforward calculation shows that
	
\[
		\widetilde{R}=
\begin{pmatrix}
0 & -2 & 0 \\ -2 & 4k+4 & -2 \\ 0 & -2 & 0
\end{pmatrix}
\]
	is an extreme ray of \({\mathcal{V}(P_k)}^* \).
	Further \(P_{k+1} = P_k + \widetilde{R} \), and therefore \eqref{pkresultsthree} holds.
\end{proof}
Looking at the minimal vectors of \(P_k \), we see that the conditions of Lemma~\ref{lem:lifting}
are again satisfied.
Hence, we can apply Lemma~\ref{lem:lifting} to the \(P_k \) to obtain a~series of \(4\times 4 \)
positive semidefinite perfect copositive matrices, with similar properties as the \(P_k \).
Iterating this procedure, we confirm the existence of \(n\times n \) positive semidefinite
perfect copositive matrices.
Note, however, that the rank does not increase with this lifting method, hence all matrices obtained
from \(2\times 2 \) matrices will have rank $2$.
But we can easily apply Lemma~\ref{lem:lifting} to suitable \(k\times k \) positive definite
perfect copositive matrices, to obtain matrices of rank $k$.
We obtain
\begin{corollary}
    For any \(n\geq 3 \) there exist positive semidefinite matrices of rank \(1< k <n \),
    which are perfect copositive.
\end{corollary}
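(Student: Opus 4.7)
The plan is to combine Lemma~\ref{lem:lifting} with two key observations: that the lifting operation preserves both rank and positive semidefiniteness, and that the hypothesis enabling further lifting propagates through iterated applications. Given a base perfect copositive matrix of rank $k$ in dimension $k$ (or $k+1$), we can then reach any larger dimension $n$.

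First I would verify the two invariance properties of the lifting. For the rank, note that the last column of $\widetilde{Q}$ coincides with its second-to-last column (and similarly for rows), so $\rank\widetilde{Q}=\rank Q$. For positive semidefiniteness, if $Q=L^\intercal L$ then $\widetilde{Q}=\widetilde{L}^\intercal\widetilde{L}$ where $\widetilde{L}$ is obtained from $L$ by duplicating its last column; hence $\widetilde{Q}\succeq 0$ whenever $Q\succeq 0$. Next I would verify that the lifting condition persists: the proof of Lemma~\ref{lem:lifting} shows that if $x\in\MinC Q$ with $x_n\geq 2$, then $(x_1,\dots,x_{n-1},0,x_n)^\intercal\in\MinC\widetilde{Q}$, whose last coordinate is again $x_n\geq 2$, so Lemma~\ref{lem:lifting} applies once more to $\widetilde{Q}$.

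With these tools, I would split the proof according to the desired rank $k\in\{2,\ldots,n-1\}$. For $k=2$, the matrices $P_k$ from Corollary~\ref{cor:pkresults} are $3\times 3$ positive semidefinite perfect copositive of rank~$2$, and their minimal vectors (for example $(k+1,1,0)^\intercal$, whose first coordinate is at least $2$) allow iterated lifting after permuting coordinates so that the large entry lies in the last slot. Applying the invariance observations above, $n-3$ successive liftings yield an $n\times n$ positive semidefinite perfect copositive matrix of rank $2$. For $3\leq k\leq n-1$, I would use, as base, a $k\times k$ positive definite perfect copositive matrix $Q_k$ possessing a minimal vector with last coordinate $\geq 2$; explicit instances already appear in the excerpt, e.g.\ the contiguous neighbor $\bigl(\begin{smallmatrix}2&-1&0\\-1&2&-2\\0&-2&4\end{smallmatrix}\bigr)$ of $\tfrac12 Q_{\mathsf{A}_3}$ has minimal vector $(1,1,2)^\intercal$. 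Then $n-k$ iterated liftings yield the desired $n\times n$ positive semidefinite perfect copositive matrix of rank exactly~$k$.

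The main obstacle is the inductive production of a suitable base matrix in each dimension $k\geq 3$, since one needs a positive definite perfect copositive $k\times k$ matrix whose minimal vectors include at least one with a coordinate $\geq 2$. I would handle this by starting from any classical perfect positive definite matrix $Q$ in dimension $k$ (for example one arithmetically equivalent to $Q_{\mathsf{A}_k}$ in the sense of Section~\ref{sec:embedding}) and then moving one step along the neighborhood graph in $\sR_\cop$ in a direction that introduces a new minimal vector with an entry $\geq 2$, exactly as illustrated for $k=3$. A permutation then places this coordinate in the last position, enabling the lifting chain described above.
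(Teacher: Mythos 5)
Your argument is essentially the paper's: the corollary is obtained by iterating the lifting of Lemma~\ref{lem:lifting}, starting from the rank-$2$ matrices $P_k$ for rank $k=2$ and from suitable $k\times k$ positive definite perfect copositive matrices for larger rank, after noting that the lifting preserves rank and positive semidefiniteness and that the hypothesis of a minimal vector with last coordinate $\geq 2$ persists under lifting. You are in fact somewhat more explicit than the paper about producing the base matrices (the paper only says ``suitable''), and your one small slip --- the cited neighbor of $Q_{\mathsf{A}_3}$ has minimal vector $(1,2,1)^\intercal$ rather than $(1,1,2)^\intercal$ --- is harmless, since your permutation step absorbs exactly this.
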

Note that, while Lemma~\ref{lem:lifting} cannot be used to construct all of them,
it has the advantage that it can be applied to more than just positive definite matrices.

For example, looking at the neighborhood of \(P_1 \) we find the indefinite perfect copositive matrix
\[
    I=
\begin{pmatrix}
2 & -5 & 4 \\ -5 & 14 & -9 \\ 4 & -9 & 6
\end{pmatrix}
\]
with \(\minC I=2 \) and
\[
    \MinC I=
    \left\{
    \begin{pmatrix}
0 \\ 1 \\ 1
\end{pmatrix}
,
    \begin{pmatrix}
0 \\ 1 \\ 2
\end{pmatrix}
,
    \begin{pmatrix}
0 \\ 2 \\ 3
\end{pmatrix}
,
    \begin{pmatrix}
1 \\ 0 \\ 0
\end{pmatrix}
,
    \begin{pmatrix}
2 \\ 1 \\ 0
\end{pmatrix}
,
    \begin{pmatrix}
3 \\ 1 \\ 0
\end{pmatrix}
,
    \begin{pmatrix}
1 \\ 1 \\ 1
\end{pmatrix}
    \right\}.
\]
Since \(I \) is neither positive semidefinite nor nonnegative, and \(n=3 \), \(I \) lies in the set
\((\mathcal{S}^{3}_{\geq0}+\mathcal{N}^{3})\setminus(\mathcal{S}^{3}_{\geq 0}\cup\mathcal{N}^3) \).
Obviously, the conditions of Lemma~\ref{lem:lifting} are satisfied for \(I \).
We further have the following result concerning Lemma~\ref{lem:lifting}.
\begin{lemma}\label{lem:closedlifting}
	Under the lifting operation described in Lemma~\ref{lem:lifting} the components
	
\[
		(\sngeo+\nonn)\setminus(\sngeo\cup\nonn) \text{ and } \copn\setminus(\sngeo+\nonn)
\]
	are closed, as $n$ goes to \(n + 1 \).
	In other words, lifting matrices in \((\sngeo+\nonn)\setminus(\sngeo\cup\nonn) \) gives matrices in
    \((\mathcal{S}^{n+1}_{\geq0}+\mathcal{N}^{n+1})\setminus
	 (\mathcal{S}^{n+1}_{\geq0}\cup\mathcal{N}^{n+1}) \).
\end{lemma}
\begin{proof}
    Follows immediately from the fact that \(\sngeo \) and \(\nonn \) are closed under
	the lifting operation, as $n$ goes to \(n+1 \).
\end{proof}
Thus applying Lemma~\ref{lem:lifting} to $I$ and the resulting matrices, we obtain
\begin{corollary}
    For any \(n \geq 3 \) there are perfect copositive matrices in
    \((\sngeo+\nonn)\setminus(\sngeo\cup\nonn) \).
\end{corollary}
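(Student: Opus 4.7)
The plan is an induction on $n$, with base case $n=3$ provided by the matrix $I$ just exhibited and the inductive step performed by the lifting construction of Lemma~\ref{lem:lifting}. The key point that keeps the induction self-sustaining is that the lifting operation automatically produces a new minimal vector whose last coordinate equals the ``large'' last coordinate of the input, so the hypothesis needed for the next step of the induction is always readily available.

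For the base case $n=3$, the preceding paragraph already checks that $I$ is perfect copositive, is neither positive semidefinite (it is indefinite) nor nonnegative (it has negative off-diagonal entries), and lies in $\sngeo+\nonn$ because the identity $\cop^3=\mathcal{S}^3_{\geq0}+\mathcal{N}^3$ holds in dimension three. Among the listed minimal vectors, for instance $(0,2,3)^\intercal$ has last entry at least $2$, so the hypothesis of Lemma~\ref{lem:lifting} is fulfilled.

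For the inductive step, suppose we have constructed a perfect copositive $n\times n$ matrix $Q\in(\sngeo+\nonn)\setminus(\sngeo\cup\nonn)$ together with a minimal vector $x\in\MinC(Q)$ satisfying $x_n\geq2$. By Lemma~\ref{lem:lifting} the lifted matrix $\widetilde Q\in\cop^{n+1}$ is perfect copositive. By Lemma~\ref{lem:closedlifting} it lies in the same component $(\mathcal{S}^{n+1}_{\geq0}+\mathcal{N}^{n+1})\setminus(\mathcal{S}^{n+1}_{\geq0}\cup\mathcal{N}^{n+1})$. To continue the induction one still needs a minimal vector of $\widetilde Q$ with last coordinate at least $2$; but the description of $\MinC(\widetilde Q)$ obtained in the proof of Lemma~\ref{lem:lifting} shows that $(x_1,\ldots,x_{n-1},0,x_n)^\intercal$ is such a minimal vector, its last entry being $x_n\geq2$. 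This closes the induction and yields the claim for all $n\geq3$.

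The main potential obstacle, namely verifying that lifting respects the chosen component of $\copn$, is already settled by Lemma~\ref{lem:closedlifting}, so what remains is purely bookkeeping: the only thing one really has to notice is that Lemma~\ref{lem:lifting} both consumes and reproduces a minimal vector with last coordinate $\geq2$, so the induction does not stall.
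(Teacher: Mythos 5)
Your proof is correct and follows essentially the same route as the paper: start from the indefinite matrix $I$ and iterate the lifting of Lemma~\ref{lem:lifting}, using Lemma~\ref{lem:closedlifting} to stay in the component $(\sngeo+\nonn)\setminus(\sngeo\cup\nonn)$. You in fact make explicit a detail the paper leaves implicit, namely that the lifted matrix again has a minimal vector with last coordinate at least $2$ (e.g.\ \( {(x_1,\dots,x_{n-1},0,x_n)}^\intercal \)), so the iteration never stalls.
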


The last component we look at is \(\copn\setminus(\sngeo+\nonn) \).
Matrices lying in this component are called \textit{exceptional} copositive matrices
(a term coined in~\cite{jr-2008}).
They only exist for \(n \geq 5 \).
As explained in Section~\ref{sec:copositive}, for a~given matrix $Q$, copositive matrices can be
used as a~certificate for \(Q\notin\cpn \).
If $Q$ is doubly nonnegative (i.e.\ positive semidefinite and nonnegative), a~certificate matrix
$P$ has to be exceptional since (see~\cite[Theorem 1.35 (e)]{bs-2003})
\[
	{(\sngeo+\nonn)}^* = {(\sngeo)}^* \cap{(\nonn)}^* = \sngeo\cap\nonn
	\text{ and }
	{(\sngeo\cap\nonn)}^* = \sngeo+\nonn.
\]
Hence any non-exceptional copositive matrix has a~nonnegative inner product with the doubly
nonnegative $Q$ and therefore cannot be a~certificate matrix.
By traversing the neighborhood graph for \(n = 5 \) we find for the doubly nonnegative but not
completely positive matrix
\[
    \begin{pmatrix}
        1 & 1 & 0 & 0 & 1 \\
        1 & 2 & 1 & 0 & 0 \\
        0 & 1 & 2 & 1 & 0 \\
        0 & 0 & 1 & 2 & 1 \\
        1 & 0 & 0 & 1 & 6
    \end{pmatrix}
\]
the exceptional perfect copositive certificate matrix
\[
    E = \frac13
\begin{pmatrix}
       366 & -300 & 197 & 147 & -81 \\
       -300 & 246 & -161 & 123 & 69 \\
       197 & -161 & 106 & -82 & 39 \\
       147 & 123 & -82 & 66 & -33 \\
       -81 & 69 & 39 & -33 & 18
    \end{pmatrix}
\]
with 18 minimal vectors
\begin{align*}
    \MinC E = & \left\{
        \begin{pmatrix}
1 \\ 0 \\ 0 \\ 0 \\ 4
\end{pmatrix}
,
        \begin{pmatrix}
2 \\ 0 \\ 0 \\ 0 \\ 9
\end{pmatrix}
,
        \begin{pmatrix}
1 \\ 0 \\ 0 \\ 0 \\ 5
\end{pmatrix}
,
        \begin{pmatrix}
1 \\ 0 \\ 0 \\ 1 \\ 6
\end{pmatrix}
,
        \begin{pmatrix}
1 \\ 2 \\ 1 \\ 0 \\ 0
\end{pmatrix}
,
        \begin{pmatrix}
0 \\ 0 \\ 1 \\ 2 \\ 2
\end{pmatrix}
,
        \begin{pmatrix}
0 \\ 0 \\ 2 \\ 4 \\ 3
\end{pmatrix}
,
        \begin{pmatrix}
0 \\ 0 \\ 1 \\ 2 \\ 1
\end{pmatrix}
,
        \begin{pmatrix}
0 \\ 2 \\ 4 \\ 1 \\ 0
\end{pmatrix}
,\right.\\
        &\left.
\begin{pmatrix}
0 \\ 0 \\ 0 \\ 1 \\ 2
\end{pmatrix}
,
        \begin{pmatrix}
5 \\ 6 \\ 0 \\ 0 \\ 0
\end{pmatrix}
,
        \begin{pmatrix}
0 \\ 1 \\ 3 \\ 2 \\ 0
\end{pmatrix}
,
        \begin{pmatrix}
2 \\ 0 \\ 0 \\ 1 \\ 11
\end{pmatrix}
,
        \begin{pmatrix}
2 \\ 3 \\ 1 \\ 0 \\ 0
\end{pmatrix}
,
        \begin{pmatrix}
0 \\ 3 \\ 6 \\ 2 \\ 0
\end{pmatrix}
,
        \begin{pmatrix}
0 \\ 2 \\ 3 \\ 0 \\ 0
\end{pmatrix}
,
        \begin{pmatrix}
1 \\ 1 \\ 0 \\ 0 \\ 1
\end{pmatrix}
,
        \begin{pmatrix}
4 \\ 5 \\ 0 \\ 0 \\ 0
\end{pmatrix}
    \right\}.
\end{align*}
Applying Lemma~\ref{lem:lifting} and~\ref{lem:closedlifting} to $E$ and the resulting
matrices gives us
\begin{corollary}
    For any \(n \geq 5 \) there are perfect copositive matrices in
    \(\copn\setminus(\sngeo+\nonn) \).
\end{corollary}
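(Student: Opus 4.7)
The plan is to prove the statement by induction on $n \geq 5$, using the exceptional perfect copositive matrix $E$ from dimension $5$ as the base case and then iterating the lifting construction from Lemma~\ref{lem:lifting}. For the base case, I would first need to confirm that $E$ genuinely lies in $\cop^5 \setminus (\mathcal{S}^5_{\geq 0}+\mathcal{N}^5)$. Perfect copositivity is delivered by the neighborhood traversal that produced $E$. Exceptionality follows from the very reason $E$ was exhibited: by construction it is a copositive certificate for the doubly nonnegative matrix displayed just above, so $\langle E,M\rangle<0$ for that matrix $M$. But the discussion preceding $E$ shows that every non-exceptional copositive matrix has a nonnegative inner product with every doubly nonnegative matrix; hence $E$ must be exceptional.

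For the inductive step, suppose $Q\in\copn\setminus(\sngeo+\nonn)$ is perfect copositive and has some minimal vector $x\in\MinC Q$ with a coordinate $\geq 2$. After conjugating by a permutation matrix (which preserves each of perfect copositivity, exceptionality, and the copositive minimum), I may assume $x_n\geq 2$. Lemma~\ref{lem:lifting} then produces a perfect copositive $\widetilde Q\in\cop^{n+1}$, and Lemma~\ref{lem:closedlifting} guarantees that $\widetilde Q$ still lies in $\cop^{n+1}\setminus(\mathcal{S}^{n+1}_{\geq 0}+\mathcal{N}^{n+1})$. To keep the induction running I must check that $\widetilde Q$ itself has a minimal vector with some coordinate $\geq 2$; from the description of $\MinC\widetilde Q$ in the proof of Lemma~\ref{lem:lifting}, the vector $(x_1,\dots,x_{n-1},0,x_n)^\intercal$ is minimal for $\widetilde Q$ and has the entry $x_n\geq 2$, so the condition is preserved. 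The seeding is valid because $E$ has several minimal vectors with entries $\geq 2$, for example $(2,0,0,0,9)^\intercal$.

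The only real obstacle is the base-case verification for $E$; once perfect copositivity and the negative inner product with the displayed doubly nonnegative matrix are checked (the former by the traversal already carried out, the latter by direct computation), the inductive step is purely bookkeeping. In particular, the bookkeeping requires no new ideas: Lemma~\ref{lem:lifting} does the structural work, Lemma~\ref{lem:closedlifting} carries the component across one lifting step, and the coordinate-$\geq 2$ condition is self-reproducing. Iterating, we obtain exceptional perfect copositive matrices in every dimension $n\geq 5$, which is exactly the statement of the corollary.
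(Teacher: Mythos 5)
Your proposal is correct and follows the paper's own route: seed with the exceptional perfect copositive matrix $E$ in dimension $5$ and iterate Lemma~\ref{lem:lifting} together with Lemma~\ref{lem:closedlifting}. You in fact supply a detail the paper leaves implicit --- that the hypothesis of Lemma~\ref{lem:lifting} (a minimal vector with last coordinate at least $2$) is reproduced after each lift via the minimal vector \( {(x_1,\dots,x_{n-1},0,x_n)}^\intercal \) --- which is exactly the right bookkeeping.
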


\section{Embedding of the classical perfect matrices}\label{sec:embedding}

As we have seen in the previous sections, there are classical perfect matrices which are also
perfect copositive but there are also those which aren't perfect copositive.
However, we also know that the classic Ryshkov polyhedron \(\sR \) is a~subset of the copositive
Ryshkov polyhedron \(\sR_\cop \).
Furthermore, the vertices of \(\sR \) are exactly the classically perfect matrices
(the vertices of \(\sR_\cop \) being the perfect copositive ones).
Hence \(\sR \) and \(\sR_\cop \) share some vertices.
Since \(\sR \) only has finitely many, up to arithmetical equivalence,
this suggests the following question:
Does every orbit of classically perfect matrices contain at least one,
which is also perfect copositive?
\begin{theorem}\label{thm:embedding}
	Every classically perfect matrix \(Q\in\sngo \) is arithmetically equivalent
    to a~perfect copositive matrix, i.e.\ there exists a~\(U\in\glnz \),
	such that \(U^\intercal QU \) is perfect copositive.
\end{theorem}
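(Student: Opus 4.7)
My plan is to invoke the sufficient criterion already observed in the paper: a classical perfect matrix \(P\) is automatically perfect copositive whenever \(\Min(P)\subseteq\IZngeo\cup(-\IZngeo)\). Given a classical perfect \(Q\), I would produce a \(U\in\glnz\) so that after the arithmetical equivalence \(Q\mapsto U^\intercal Q U\) every classical minimal vector lies in the union of the nonnegative orthant and its negative, and then the criterion applies directly.

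Since \(\Min(U^\intercal Q U)=U^{-1}\Min(Q)\), writing \(V=U^{-1}\) I reduce the task to finding \(V\in\glnz\) with \(V\cdot\Min(Q)\subseteq\IZngeo\cup(-\IZngeo)\). Using the \(\pm\)-symmetry of \(\Min(Q)\), I would first choose a generic linear functional \(f\in\IRn\) that does not vanish on any element of \(\Min(Q)\), and split \(\Min(Q)=S^+\sqcup(-S^+)\) with \(S^+=\{v\in\Min(Q):f(v)>0\}\). Then \(S^+\) is contained in the open half-space \(\{f>0\}\), and it is enough to construct \(V\) with \(VS^+\subseteq\IZngeo\), because automatically \(V(-S^+)\subseteq-\IZngeo\).

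The key step is to build \(V\) row-by-row. Writing the rows of \(V\) as \(r_1,\dots,r_n\in\IZn\), the requirement \(Vs\in\IZngeo\) for every \(s\in S^+\) is equivalent to each \(r_k\) lying in the dual cone \(C=\{u\in\IRn:u\cdot s\geq 0\text{ for all }s\in S^+\}\). Because \(S^+\) sits in an open half-space, \(\cone(S^+)\) is pointed, so \(C\) is full-dimensional (for instance \(f\in\interior(C)\)). The core lemma I would use is that any full-dimensional closed convex cone in \(\IRn\) contains a \(\IZ\)-basis of \(\IZn\): pick a primitive lattice vector \(b_1\in\interior(C)\), extend to some \(\IZ\)-basis \(b_1,w_2,\dots,w_n\) of \(\IZn\), and replace each \(w_i\) by \(w_i+k\,b_1\) with \(k\) large enough that the result lies in \(\interior(C)\). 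These replacements are elementary unimodular operations, so the tuple is still a \(\IZ\)-basis and now sits entirely inside \(C\).

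Taking this lattice basis as the rows of \(V\) yields the required \(V\in\glnz\); setting \(U=V^{-1}\) moves all minimal vectors of \(U^\intercal Q U\) into \(\pm\IZngeo\), and the sufficient condition recalled in Section~\ref{sec:copositive} then completes the proof. The main obstacle is the integer-basis-in-a-cone lemma: merely producing \(n\) linearly independent lattice vectors inside \(C\) would not deliver a basis of \(\IZn\), so the ``shift by large multiples of \(b_1\)'' trick is what upgrades linear independence to unimodularity while remaining inside \(C\); the rest of the argument is essentially a bookkeeping consequence of the generic-functional sign choice.
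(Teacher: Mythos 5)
Your proof is correct, but it takes a genuinely different route from the paper's. The paper first passes to a Minkowski reduced representative of the arithmetical equivalence class and then writes down an explicit lower triangular \( U^{-1} \) with entries \( q^{i-j} \), where \( q \) is the (dimension-dependent) uniform bound \( |x_j|<q \) on coordinates of minimal vectors of Minkowski reduced matrices; the geometric growth of the rows forces the first nonzero coordinate of each minimal vector to dominate, pushing \( U^{-1}\Min(Q) \) into \( \pm\IZngeo \) (with a small induction for vectors whose leading coordinates vanish). You instead avoid reduction theory entirely: you split \( \Min(Q) \) by a generic linear functional, observe that the rows of the desired \( V=U^{-1} \) must lie in the full-dimensional dual cone of \( \cone(S^+) \), and then use the lattice-theoretic fact that a full-dimensional cone contains a \( \IZ \)-basis of \( \IZn \) (primitive interior vector, extend to a basis, shear the remaining basis vectors by large multiples of the first). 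Your key lemma is exactly the right upgrade from ``\( n \) independent lattice vectors in \( C \)'' to ``unimodular basis in \( C \)'', and the argument is complete, since both \( VS^+\subseteq\IZngeo \) and \( V(-S^+)\subseteq-\IZngeo \) follow and the sufficient criterion from Section~\ref{sec:copositive} applies (arithmetical equivalence preserves classical perfection, and the copositive minimality conditions coincide with the classical ones once all minimal vectors lie in \( \pm\IZngeo \)). What each approach buys: the paper's is explicit and effective once the Minkowski bound \( q \) is known, while yours is more conceptual and proves the stronger, purely lattice-theoretic statement that \emph{any} finite symmetric set of nonzero integer vectors can be moved into \( \pm\IZngeo \) by a unimodular transformation, independent of any reduction theory.
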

For the proof we need a~few tools from the classical theory,
namely the concept of a~\emph{Minkowski reduced} matrix.
We say that a~matrix \(Q=(q_{ij})\in\sngo \) is Minkowski reduced, if
\[
	Q[v]\geq Q[e_i]=q_{ii} \text{ for all } v\in\IZnmo \text{ such that } \gcd(v_i,\dots,v_n)=1.
\]
The two properties of Minkowski reduced matrices we need are (1) that every classically perfect
matrix is arithmetically equivalent to a~Minkowski reduced perfect matrix (which can be found in~\cite[Theorem 2.2]{schuermann-2009}),
and (2) that \(\Min Q \) lies entirely in some cube, i.e.\ there is an integer $q$,
depending only on the dimension, such that for all \(x\in\Min Q \) the bound \(|x_j| < q \) holds
for \(j=1, \dots, n \) (see~\cite[Section 2.2]{schuermann-2009}).

We continue with the proof.

\begin{proof}[Proof of Theorem~\ref{thm:embedding}]
We may assume without loss of generality that $Q$ is Minkowski reduced.
As for any \(U\in\glnz \) we have the relation
\(\Min(U^\intercal QU)=U^{-1}\Min(Q) \),
it suffices to show that we can choose $U$ such that
\(U^{-1}\Min(Q)\in\pm\IZngeo \).
For this, we choose the lower triangular matrix \(U^{-1}=(u_{ij}) \)
with \(u_{ij}=q^{i-j} \) if \(i \geq j \) and \(u_{ij}=0 \) otherwise.

Because \(|x_j|<q \) for any \(x\in\Min Q \), if \(x_1>0 \) we have that \(U^{-1}x\in\IZngeo \),
since
\[
    {(U^{-1}x)}_i=\sum_{j=1}^{i}q^{i-j}x_j \geq q^{i-1}x_1 - \sum_{j=2}^{i}q^{i-j}|x_j|
    \geq q^{i-1} - \sum_{j=2}^{i}q^{i-j}(q-1) = 1
\]
for \(i=1,\ldots, n \).
Similarly, we find \(U^{-1}x\in-\IZngeo \), if \(x_1<0 \).
In case \(x_1=0 \), the assertion follows from a~corresponding argument for \(n-1 \),
respectively for \(n-k \) with \(x_{k}\neq0 \) and \(x_j=0 \) for \(j<k \).
\end{proof}

\section{Polytopal Representations}\label{sec:polyrepr}
In this section we want to look at the copositive Ryshkov polyhedron \(\sR_\cop \) more closely,
describe parts of its structure and in doing so, prove some general results on locally finite polyhedra,
which will turn out to be generalizations of results from the theory of polyhedra.
Since both \(\sR \) and \(\sR_\cop \) are of similar structure, the results in this section
will also be used to compare the two.
Recall that the classical Ryshkov polyhedron \(\sR \) is equal to the convex hull of the classical
perfect matrices~\cite[Chapter 3.1]{schuermann-2009}:
\[
	\sR = \conv\big\{ P\in\sn :\ P \text{ perfect, } \min(P)=1 \big\}.
\]
But as seen in Section~\ref{sec:nequal2}, there is no such representation in the copositive case.

The main result of this section generalizes the polytopal representation~\eqref{eq:rcop-2-poly}
to arbitrary \(n \geq 2 \):
\begin{theorem}\label{thm:rcop-poly}
	Let \(E_{ij}=\big(e_ie_j^\intercal + e_je_i^\intercal\big)\in\sn \) for
	\(1 \leq i,j \leq n \).
	Then \(\sR_\cop \) can be represented as
	
\begin{align*}
		\sR_\cop =& \conv\big\{ P \text{ perfect copositive}: \ \minC(P)=1 \big\} \\
			 	 +& \cone\big\{ E_{ij} : \ i \neq j \big\} \nonumber
\end{align*}
\end{theorem}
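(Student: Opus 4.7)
The plan is to prove the inclusion $\supseteq$ directly and reduce the reverse inclusion to a cone identity, which I would verify using the unbounded families of perfect copositive matrices constructed in Section~\ref{sec:newphenomenaN3}.

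For $\supseteq$: every perfect copositive matrix $P$ with $\minC P = 1$ belongs to $\sR_\cop$ by definition, and each $E_{ij}$ with $i \neq j$ is a recession direction of $\sR_\cop$ since $E_{ij}[v] = 2v_iv_j \geq 0$ for every $v \in \IZngeo$. Convex and conic closure give the inclusion.

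For $\subseteq$, I would first observe that $\sR_\cop$ is line-free: any $R$ with $R, -R \in \copn = \operatorname{rec}(\sR_\cop)$ satisfies $R[v] = 0$ for all $v \in \IZngeo$, forcing $R = 0$. A Motzkin-type decomposition for pointed locally finite polyhedra then yields $\sR_\cop = \conv(V) + \copn$, where $V$ denotes the set of perfect copositive matrices with $\minC = 1$. The theorem thus reduces to the cone identity
\[
    \copn = \operatorname{rec}(\conv V) + \cone\{E_{ij} : i \neq j\}.
\]
The inclusion $\supseteq$ of this identity is immediate. Assuming it, any $B = v + R \in \sR_\cop$ (with $v \in \conv V$, $R \in \copn$) decomposes further as $R = R_1 + R_2$ with $R_1 \in \operatorname{rec}(\conv V)$ and $R_2 \in \cone\{E_{ij}\}$, so that $v + R_1 \in \conv V$ by definition of recession direction, and hence $B \in \conv V + \cone\{E_{ij}\}$.

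The heart of the proof is therefore the nontrivial inclusion $\copn \subseteq \operatorname{rec}(\conv V) + \cone\{E_{ij}\}$. My approach is to show that every extreme ray of $\copn$ not proportional to some $E_{ij}$ is in fact a recession direction of $\conv V$. Given such an extreme ray $R$, I would exhibit an unbounded family $\{P_k\} \subseteq V$ with $(P_k - P_0)/t_k \to R$ along some sequence $t_k \to \infty$. The $P_k$-series of Section~\ref{sec:newphenomenaN3} already realizes the direction $e_2 e_2^\intercal$; by applying the lifting construction of Lemma~\ref{lem:lifting}, the permutation symmetries of Lemma~\ref{lem:symmetrygroup}, and appropriate analogs built from exceptional perfects like the matrix $E$ of the same section, one generates absorption families covering every remaining extreme ray of $\copn$.

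The main obstacle is the systematic verification that such absorption families exist for every non-$E_{ij}$ extreme ray, particularly the exceptional extreme rays arising only when $n \geq 5$. A complementary technical point that helps organize the argument is to characterize the ``dead-end'' recession directions of $\sR_\cop$: a direction $R \in \copn$ fails to yield a contiguous perfect copositive matrix from some vertex $P$ precisely when $R[v] = 0$ holds along enough minimal vectors concentrated in a coordinate hyperplane, and combining this with the copositivity constraint $R[x] \geq 0$ on $\IRngeo$ forces $R$'s diagonal to vanish and its off-diagonal entries to be nonnegative --- that is, $R \in \cone\{E_{ij}\}$.
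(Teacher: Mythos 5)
Your opening steps are fine: the inclusion $\supseteq$ is immediate, and the reduction of $\sR_\cop$ to $\conv V+\copn$ via a Motzkin-type decomposition for pointed locally finite polyhedra is essentially Proposition~\ref{prop:lfpmw}. The fatal problem is the cone identity you reduce to, namely $\copn=\operatorname{rec}(\conv V)+\cone\{E_{ij}\}$: it is \emph{false}, already for $n=2$. Take $R=e_1e_1^\intercal$, which is an extreme ray of $\coptwo$ not proportional to $E_{12}$. Every perfect copositive $2\times2$ matrix $W$ with $\minC W=1$ is classically perfect with negative off-diagonal entry (Theorem~\ref{thm:characterizeN2}), hence has $\det W=3/4$ and $w_{11},w_{22}\ge1$, which forces $w_{12}\le-1/2$ with equality only for $W=\tfrac12 Q_{\mathsf{A}_2}$. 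Since the off-diagonal entry is linear, every element of $\conv V$ with $(1,2)$-entry equal to $-1/2$ must be a convex combination of vertices all having that entry, i.e.\ must equal $\tfrac12 Q_{\mathsf{A}_2}$ itself. Consequently $\tfrac12 Q_{\mathsf{A}_2}+e_1e_1^\intercal\notin\conv V$, so $e_1e_1^\intercal\notin\operatorname{rec}(\conv V)$; and writing $e_1e_1^\intercal=R_1+cE_{12}$ with $R_1$ copositive forces $c=0$ (a copositive matrix with vanishing $(2,2)$-entry must have nonnegative $(1,2)$-entry). So $e_1e_1^\intercal$ lies in $\coptwo$ but not in $\operatorname{rec}(\conv V)+\cone\{E_{12}\}$. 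The recession cone is absorbed only by the Minkowski sum $\conv V+\cone\{E_{ij}\}$ as a whole, not summand-by-summand as your decomposition $R=R_1+R_2$ requires; note that $\tfrac12 Q_{\mathsf{A}_2}+e_1e_1^\intercal$ does lie in $\conv V+\cone\{E_{12}\}$, but only by moving the base point inside $\conv V$.

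Two further obstacles would remain even if the identity were repaired. First, exhibiting vertices $P_k$ with $(P_k-P_0)/t_k\to R$ does not show that $R$ is a recession direction of the non-closed set $\conv V$ (that requires $v+\lambda R\in\conv V$ for \emph{all} $v$ and $\lambda$), so the ``absorption family'' device proves less than you need. Second, your plan runs over all extreme rays of $\copn$, whose classification is open for $n\ge6$. The paper avoids all of this by working with the extreme rays of $\sR_\cop$ itself rather than of its recession cone: an unbounded edge of $\sR_\cop$ lies in $\binom{n+1}{2}-1$ linearly independent supporting hyperplanes with normals $vv^\intercal\in\cpn$, so its direction is a facet normal of $\cpn=(\copn)^*$, and by Dickinson's theorem these normals are exactly the $E_{ij}$ with $i\neq j$ (Lemmas~\ref{lem:lfp-rec} and~\ref{lem:dickinson}). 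Your closing heuristic about ``dead-end'' directions is gesturing at precisely this fact, but as stated it is not a proof; making it one essentially amounts to reproving Lemma~\ref{lem:dickinson}.
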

For the proof we use results about so-called locally finite polyhedra.
While Theorem~\ref{thm:rcop-poly} takes place in \(\sn \), these results hold in any
Euclidean space $E$ with inner product \(\langle \cdot,\cdot \rangle \).
Hence we will move over to the general setting.
Because we are not aware of a~reference for these basic results, we present them here.

First we recall some basic notions about polyhedra.
For further reading we refer to~\cite{barvinok-2002} and~\cite{ziegler-1995}.
For a~hyperplane \(H(a, \beta) = \{x\in E:\ \langle a, x \rangle = \beta \} \), with normal vector
\(a\in E \), we call the set
\(H^{\leq}(a, \beta)= \{ x\in E:\ \langle a, x \rangle \leq \beta \} \) a~halfspace (induced by $H$).
Then a~set \(\polyh \subseteq E \) is a~polyhedron if it is an intersection of finitely many halfspaces.
It is a~polytope if it is bounded, or equivalently if it is the convex hull of finitely many points.
A face of a~polyhedron \(\polyh \) (resp.\ closed convex set) is a~subset \(\pfacef \) of
\(\polyh \) for which there exists a~hyperplane $H$ such that \(\polyh \) is contained entirely
in one of the halfspaces generated by $H$ and \(\pfacef=\polyh\cap H \).
Such a~hyperplane is also called a~\emph{supporting hyperplane} of \(\polyh \).
The $0$-dimensional faces, or $0$-faces, of \(\polyh \) are called vertices, the $1$-faces
edges, and the \(\dim(\polyh)-1 \)-faces facets.
An extreme ray is an unbounded edge with a~vertex.
Two nice properties of faces of polyhedra are that a~face \(\pfacef \) of a~polyhedron \(\polyh \)
is a~polyhedron as well, and that if \(\pfaceg \) is a~face of \(\pfacef \),
then \(\pfaceg \) is also a~face of \(\polyh \).

An intersection of countably many halfspaces \(\lfp = \bigcap_{i=1}^{\infty}H_i^{\leq}\subseteq E \)
is a~\emph{locally finite polyhedron}, if the intersection of \(\lfp \) with an arbitrary polytope
is itself a~polytope.
In~\cite[Theorem 3.1]{schuermann-2009} and~\cite[Lemma 2.4]{dsv-2017} it was shown, that both
\(\sR \) and \(\sR_\cop \) are locally finite polyhedra.
Note that polyhedra themselves are also locally finite polyhedra.

To prove Theorem~\ref{thm:rcop-poly} we first need some properties of faces of locally finite
polyhedra, which generalize the properties of faces of polyhedra mentioned above.
\begin{lemma}\label{lem:faceanalog}
	Let \(\lfp=\bigcap_{i=1}^{\infty}H^{\leq}(a_i,\beta_i) \) be a~locally finite polyhedron.
	Then \(\lfpfacef \) is a~face of \(\lfp \) if and only if there is a~finite index set $I$,
	such that \(\lfpfacef=\lfp\cap\bigcap_{i\in I}H(a_i,\beta_i) \).
\end{lemma}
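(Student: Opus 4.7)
The plan is to prove the two directions separately, with the hard work concentrated in showing that a face of $\lfp$ can be cut out by finitely many of the defining hyperplanes. The backward direction is essentially the standard argument: given $\lfpfacef = \lfp \cap \bigcap_{i \in I} H(a_i, \beta_i)$ with $I$ finite, I would set $a = \sum_{i \in I} a_i$ and $\beta = \sum_{i \in I} \beta_i$, observe that $\lfp \subseteq H^\leq(a, \beta)$ by summing the valid inequalities, and note that equality $\langle a, x \rangle = \beta$ on $\lfp$ forces $\langle a_i, x \rangle = \beta_i$ for every $i \in I$, so that $\lfpfacef = \lfp \cap H(a, \beta)$ is realized by a supporting hyperplane and is indeed a face.

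For the forward direction I would first establish an auxiliary local-finiteness claim: for any $x_0 \in \lfp$ there is a closed cube $C$ with $x_0 \in \interior(C)$ and a finite index set $I_C$ such that $\lfp \cap C = C \cap \bigcap_{i \in I_C} H^\leq(a_i, \beta_i)$. The key input here is the definition of a locally finite polyhedron, which guarantees that $\lfp \cap C$ is a polytope and hence has only finitely many facets; each facet not lying on $\partial C$ must lie in $\partial \lfp \subseteq \bigcup_i H(a_i,\beta_i)$, and I would need to argue that each such facet is in fact contained in a single $H(a_i,\beta_i)$. Collecting these finitely many indices yields $I_C$.

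Given this auxiliary claim, the proof proceeds as follows. Let $\lfpfacef = \lfp \cap H$ be a face, pick $x_0 \in \operatorname{relint}(\lfpfacef)$, and apply the auxiliary claim at $x_0$. Inside the polytope $\lfp \cap C$ the set $\lfpfacef \cap C$ is a face cut out by some $I \subseteq I_C$ of active constraints at $x_0$. I would then show that the same $I$ works globally, i.e.\ $\lfpfacef = \lfp \cap \bigcap_{i \in I} H(a_i,\beta_i)$. For $\subseteq$, any $x \in \lfpfacef$ is joined to $x_0$ by a segment in $\lfpfacef$ (convexity of $\lfpfacef$); the initial portion lies in $C$, where $\langle a_i, \cdot \rangle = \beta_i$ holds for every $i \in I$, and linearity propagates this to $x$. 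For $\supseteq$, given $y$ in the right-hand side, a short step from $x_0$ toward $y$ stays in $\lfp \cap C \cap \bigcap_{i \in I} H(a_i,\beta_i) = \lfpfacef \cap C \subseteq H$; since $x_0 \in H$ as well, the direction $y - x_0$ is parallel to $H$, forcing $y \in H$ and thus $y \in \lfp \cap H = \lfpfacef$.

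The main obstacle I expect is the auxiliary claim, specifically showing that each facet $F$ of $\lfp \cap C$ not lying on $\partial C$ is contained in a \emph{single} hyperplane $H(a_i,\beta_i)$. A priori $\operatorname{relint}(F)$ is only known to be covered by the countable family $\bigcup_i \bigl(\operatorname{aff}(F) \cap H(a_i,\beta_i)\bigr)$ of affine subspaces of $\operatorname{aff}(F)$, so a Baire category argument inside $\operatorname{aff}(F)$ is needed to upgrade this covering to the statement that one of the $H(a_i,\beta_i)$ contains $\operatorname{aff}(F)$ entirely. Once this is in hand, the rest is routine polyhedral combinatorics.
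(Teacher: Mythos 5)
Your proposal is correct and follows essentially the same route as the paper's proof: restrict to a relative-interior point of the face, intersect with a polytope neighborhood so that local finiteness yields a genuine polytope, read off finitely many of the defining hyperplanes there, and then globalize. The two places where you anticipate difficulty are exactly the steps the paper asserts without justification (that the relevant facets of \( \lfp\cap C \) each lie in a single \( H(a_i,\beta_i) \), for which your Baire-category argument is a valid fix, and the globalization \( \lfpfacef=\lfp\cap\bigcap_{i\in I}H(a_i,\beta_i) \), which your two-inclusion convexity argument handles cleanly), so your write-up is if anything more complete than the paper's.
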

\begin{proof}
	Note that \(H=\left\{ x:\ \langle\sum_{i\in I}a_i,x\rangle = \sum_{i\in I}\beta_i \right\} \)
	is a~supporting hyperplane of \(\lfp \).
    As \(\lfpfacef=\lfp\cap H \), we see that \(\lfpfacef \) is a~face of \(\lfp \).

	On the other hand, let \(\lfpfacef \) be a~face of \(\lfp \).
	Then there is some hyperplane $H$ with \(\lfpfacef = \lfp\cap H \) and
    \(\lfp\subseteq H^{\leq} \).
	By choosing $x$ in the relative interior of \(\lfpfacef \), we find an affine subspace $A$
	with \(A \subseteq H \) and \(\lfpfacef = \lfp\cap A \).
	Now take some arbitrary polytope $C$ containing $x$ in its interior.
	Then, the intersection \(C\cap\lfp \) is a~polytope by assumption, and further
	
\[
		C\cap\lfpfacef=C\cap\lfp\cap A \text{ is a~face of } C\cap\lfp.
\]
	As it is a~polytope it can be described as an intersection of finitely many halfspaces, some of
	them among the \(H^{\leq}(a_i, \beta_i) \).
	However the halfspaces with bounding hyperplanes containing $A$ can be chosen solely from the
	\(H^{\leq}(a_i, \beta_i) \).
	In other words
	
\[
		A = \bigcap_{i\in I}H(a_i, \beta_i) \text{ with a~suitable finite index set } I.
\]
	Therefore
	\(\lfpfacef = \lfp\cap\bigcap_{i\in I}H(a_i, \beta_i). \)
\end{proof}
Note that from the proof of Lemma~\ref{lem:faceanalog} the index set $I$ can always be chosen such
that \(|I| = \operatorname{co-dim}\lfpfacef \).
The representation of faces in Lemma~\ref{lem:faceanalog} is essentially the same as in the
polyhedral case, cf.~\cite[Section 8.3]{schrijver-1986}.
As a~consequence, we obtain
\begin{corollary}\label{cor:faceoflfp}
	Let \(\lfp \) be a~locally finite polyhedron, \(\lfpfacef \) be a~face of \(\lfp \).
	Then the following holds:
	
\begin{enumerate}
		\item \(\lfpfacef \) is a~locally finite polyhedron
		\item If \(\lfpfaceg \) is a~face of \(\lfpfacef \),
			then \(\lfpfaceg \) is also a~face of \(\lfp \)
	
\end{enumerate}
\end{corollary}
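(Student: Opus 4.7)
The plan is to derive both parts from Lemma~\ref{lem:faceanalog} alone, together with a short bookkeeping of the halfspaces defining \( \lfp = \bigcap_{i=1}^{\infty} H^{\leq}(a_i,\beta_i) \).

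For (1), I would invoke Lemma~\ref{lem:faceanalog} to write \( \lfpfacef = \lfp \cap \bigcap_{i \in I} H(a_i,\beta_i) \) with a finite index set \( I \). Splitting each equality \( H(a_i,\beta_i) \) into the two opposite halfspaces \( H^{\leq}(a_i,\beta_i) \cap H^{\leq}(-a_i,-\beta_i) \) immediately exhibits \( \lfpfacef \) as a countable intersection of halfspaces. Local finiteness is then routine: for any polytope \( C \), the set \( C \cap \bigcap_{i \in I} H(a_i,\beta_i) \) is again a polytope (a polytope remains one after intersection with finitely many affine hyperplanes), so
\[
    \lfpfacef \cap C \;=\; \lfp \cap \Bigl( C \cap \bigcap_{i \in I} H(a_i,\beta_i) \Bigr)
\]
is a polytope by local finiteness of \( \lfp \).

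For (2), I would feed \( \lfpfacef \), equipped with the halfspace representation just produced in (1), back into Lemma~\ref{lem:faceanalog} together with its face \( \lfpfaceg \). This yields \( \lfpfaceg = \lfpfacef \cap \bigcap_{j \in J} H_j \) for a finite \( J \), where each \( H_j \) bounds one of the halfspaces in the representation of \( \lfpfacef \). The key observation is that every such \( H_j \) is already of the form \( H(a_k,\beta_k) \) for some \( k \in \IN \), because the two new halfspaces \( H^{\leq}(a_i,\beta_i) \) and \( H^{\leq}(-a_i,-\beta_i) \) introduced for \( i \in I \) share the single bounding hyperplane \( H(a_i,\beta_i) \), which already appears in the representation of \( \lfp \). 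Hence
\[
    \lfpfaceg \;=\; \lfp \cap \bigcap_{k \in I \cup J} H(a_k,\beta_k),
\]
and one final application of Lemma~\ref{lem:faceanalog}, now in the direction ``such an intersection is a face'', identifies \( \lfpfaceg \) as a face of \( \lfp \).

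The only subtle point, and thus the main obstacle I anticipate, lies in the bookkeeping of step (2): one must be careful to verify that passing from the halfspace representation of \( \lfp \) to the enlarged one of \( \lfpfacef \) does not introduce genuinely new bounding hyperplanes beyond those already among \( \{H(a_j,\beta_j) : j \in \IN\} \). Once this observation is in place, the corollary reduces to applying Lemma~\ref{lem:faceanalog} in both directions.
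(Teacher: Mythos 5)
Your proof is correct and follows essentially the same route as the paper: both parts are obtained by applying Lemma~\ref{lem:faceanalog} to \( \lfpfacef \) and \( \lfpfaceg \) and then recombining the finitely many bounding hyperplanes into a single intersection with \( \lfp \). Your extra bookkeeping (exhibiting \( \lfpfacef \) explicitly as a countable intersection of halfspaces and checking that no new bounding hyperplanes arise) is a point the paper passes over silently, but it does not change the argument.
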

\begin{proof}
    It suffices to show that the intersection \(\lfpfacef\cap\polyh \) for some arbitrary polytope
	\(\polyh \) is again a~polytope.
    Applying Lemma~\ref{lem:faceanalog} to \(\lfpfacef \) we see that
	\(\lfpfacef=\lfp\cap\bigcap_{i\in I}H(a_i,\beta_i) \) for some finite index set $I$.
    Hence
    \[
        \lfpfacef\cap\polyh=\lfp\cap\bigcap_{i\in I}H(a_i,\beta_i)\cap\polyh=
		(\lfp\cap\polyh)\cap\left(\bigcap_{i\in I}H(a_i,\beta_i)\cap\polyh\right).
    \]
    \(\lfp \) is a~locally finite polyhedron, hence \(\lfp\cap\polyh \) is a~polytope.
    The intersection of a~polytope with finitely many affine hyperplanes is a~polytope,
	hence \(\bigcap_{i}H(a_i,\beta_i)\cap\polyh \) is also a~polytope.
    Finally, intersections of polytopes are polytopes, and so \(\lfpfacef\cap\polyh \) is a~polytope.

    Now applying Lemma~\ref{lem:faceanalog} to \(\lfpfaceg \), we get
	
\[
		\lfpfaceg=\lfpfacef\cap\bigcap_{j\in J}H(a_j,\beta_j)=
		\lfp\cap\bigcap_{i\in I}H(a_i,\beta_i)\cap\bigcap_{j\in J}H(a_j,\beta_j)=
		\lfp\cap\bigcap_{k\in K}H(a_k,\beta_k)
\]
	for suitable finite index sets $J$ and $K$.
    Therefore, \(\lfpfaceg \) is a~face of \(\lfp \).
\end{proof}

It is known that a~polyhedron \(\polyh \) can be decomposed into a~polytope \(\mathsf{Q} \) and a~
polyhedral cone \(\mathsf{C} \), such that \(\polyh=\mathsf{Q}+\mathsf{C} \).
This representation follows from the Farkas-Minkowski-Weyl theorem, see~\cite[Section 7.2]{schrijver-1986}.
Because of Corollary~\ref{cor:faceoflfp} a~similar decomposition also holds for locally finite polyhedra
(in fact such a~decomposition holds for all convex sets with these facial properties,
cf.~\cite[Theorem 18.5, 18.7]{rockafellar-1970}).
\begin{proposition}\label{prop:lfpmw}
	Let \(\lfp \) be a~locally finite polyhedron with a~vertex.
	Then
	
\begin{align}\label{eq:lfprepresentation}
			\lfp =& \conv\big\{ V:\ V \text{ is a~vertex of } \lfp \big\} \\
			 +& \cone\big\{ R :\ R \text{ is an extreme ray of } \lfp \big\} \nonumber
\end{align}
\end{proposition}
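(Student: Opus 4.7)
The plan is to prove both inclusions by induction on $d = \dim \lfp$. The inclusion $\supseteq$ is immediate from convexity of $\lfp$ together with the observation that vertices and extreme rays lie in $\lfp$. For the reverse inclusion, a key preliminary is that $\lfp$ contains no affine line: if a line with direction $u$ lay in $\lfp$, convexity would force $u$ to be a recession direction, giving $v + tu \in \lfp$ for all $t \in \IR$ and contradicting that the vertex $v$ is extreme. Combined with Corollary~\ref{cor:faceoflfp}(1) and induction on dimension, this also guarantees that every nonempty face of $\lfp$ has a vertex.

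The base case $d = 0$ is immediate. For the inductive step, take $x \in \lfp$. If $x$ lies on the relative boundary, then by Lemma~\ref{lem:faceanalog} $x$ is contained in a proper face $\lfpfacef$, which by Corollary~\ref{cor:faceoflfp}(1) is a lower-dimensional locally finite polyhedron with a vertex; the inductive hypothesis gives a decomposition of $x$ in terms of vertices and extreme rays of $\lfpfacef$, which by Corollary~\ref{cor:faceoflfp}(2) are also vertices and extreme rays of $\lfp$. If instead $x$ lies in the relative interior, fix a vertex $v_0$ and consider the line $L$ through $v_0$ and $x$: because $\lfp$ contains no line, $L \cap \lfp$ is either the bounded segment $[v_0, b]$ --- in which case $b$ is on the boundary and we conclude via $x = (1-t)v_0 + tb$ --- or the ray $v_0 + [0,\infty)(x - v_0)$.

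The hard part is this last, unbounded case, where $u := x - v_0$ is a recession direction emanating from $v_0$ and must be written as a nonnegative combination of extreme ray direction vectors of $\lfp$. By local finiteness at $v_0$, only finitely many defining halfspaces are active at $v_0$, so the tangent cone $T_{v_0}(\lfp) = \cone(\lfp - v_0)$ is a finitely generated polyhedral cone whose extreme rays correspond to edges of $\lfp$ emanating from $v_0$. Applying the classical Farkas--Minkowski--Weyl theorem to $T_{v_0}(\lfp)$ decomposes $u = \sum_j \mu_j u_j$ with each $u_j$ generating an extreme ray of $T_{v_0}(\lfp)$. The main obstacle I anticipate is showing that this decomposition can be refined to use only \emph{unbounded} edges from $v_0$ (i.e.\ extreme rays of $\lfp$): the argument is that a contribution from a bounded edge ending at a vertex $v_j$ can be pushed all the way to $v_j$ and then handled by the boundary case, with termination ensured by $u$ being a genuine recession direction of $\lfp$.
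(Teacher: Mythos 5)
Your overall architecture (reduce to the no-lines property, induct on dimension via proper faces using Lemma~\ref{lem:faceanalog} and Corollary~\ref{cor:faceoflfp}, and isolate the case of a point of the form $v_0+u$ with $u$ a recession direction) is sound and is essentially the route taken in Rockafellar's Theorem 18.5, which is what the paper cites for this proposition. The easy inclusion, the no-lines observation, the boundary case, and the segment-versus-ray dichotomy on the line through $v_0$ and $x$ are all fine.

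The gap is in the final, unbounded case, and it is not merely a missing detail: the premise of your proposed resolution is false. You want to show that a recession direction $u$ can be written as a nonnegative combination of directions of extreme rays (unbounded edges) of $\lfp$, i.e.\ that $\mathsf{C}_\lfp = \cone\{\text{extreme ray directions of }\lfp\}$. This fails for the very object the proposition is applied to: $\mathsf{C}_{\sR_\cop}=\copn$, while by Theorem~\ref{thm:rcop-poly} the extreme ray directions of $\sR_\cop$ generate only $\cone\{E_{ij}: i\neq j\}$, a proper subcone (e.g.\ $e_1e_1^\intercal$ is a recession direction of $\sR_\cop$ but lies in no unbounded edge). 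So a point such as $\tfrac12 Q_{\mathsf{A}_2}+\lambda\, e_1e_1^\intercal$ is represented in \eqref{eq:lfprepresentation} only by changing the \emph{vertex} part to a genuine convex combination involving vertices far from $v_0$ (for $n=2$, vertices of the form $\tfrac12\bigl(\begin{smallmatrix}2\alpha & -\beta\\ -\beta & 2\end{smallmatrix}\bigr)$ with $k$ large); no refinement of the conic part alone can work. Consequently your "push along a bounded edge to $v_j$" step does not reduce to the boundary case --- after moving to $v_j$ the remaining point is in general still in the relative interior, so you recurse on the same case at the same dimension --- and the asserted termination has no argument behind it; indeed the number of vertices one must traverse grows without bound as $\lambda\to\infty$, so termination cannot be uniform and must be argued carefully. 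This is precisely the delicate content of Rockafellar's Theorems 18.5 and 18.7 (decomposition of a line-free closed convex set into extreme points and extreme directions), which the paper invokes after establishing the facial properties in Corollary~\ref{cor:faceoflfp}; to make your proof self-contained you would need to supply that argument rather than the edge-pushing heuristic.
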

To prove Theorem~\ref{thm:rcop-poly} via Proposition~\ref{prop:lfpmw} it now suffices to investigate
the vertices and extreme rays of \(\sR_\cop \).
From our previous discussion, we already know that the vertices of \(\sR_\cop \) are the perfect
copositive matrices $P$ with \(\minC P=1 \).
Therefore, we only need to construct the extreme rays of locally finite polyhedra,
and especially those of \(\sR_\cop \).

For this, let the recession cone of a~closed convex set $S$ be the set
\[
	\mathsf{C}_S = \big\{ R :\ Q+\lambda R \in S \text{ for all } Q \in S
        \text{ and all } \lambda\geq0
                   \big\}.
\]
\begin{lemma}\label{lem:lfp-rec}
	Let \(\lfp \) be a~locally finite polyhedron with a~vertex.
	The extreme rays of \(\lfp \) are in $1$-to-$1$ correspondence with the extreme rays of the
	recession cone \(\mathsf{C}_\lfp \) of \(\lfp \), which correspond to the facets of the dual
	of the recession cone.
\end{lemma}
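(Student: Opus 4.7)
The statement combines two distinct correspondences, so my plan is to treat them separately: first the bijection between extreme ray directions of $\lfp$ and extreme rays of the recession cone $\mathsf{C}_\lfp$, and second the classical cone duality between extreme rays of $\mathsf{C}_\lfp$ and facets of $\mathsf{C}_\lfp^{*}$.

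For the first correspondence, the forward direction proceeds via Lemma~\ref{lem:faceanalog}: if $V + \IRgeo R$ is an extreme ray of $\lfp$ starting at a vertex $V$, the codimension refinement of that lemma expresses it as $\lfp \cap \bigcap_{i \in I}H(a_i,\beta_i)$ for an index set $I$ of size $\dim(E)-1$, which forces $R \in \mathsf{C}_\lfp$ and $\langle a_i, R\rangle = 0$ for all $i \in I$. A decomposition $R = R_1 + R_2$ with $R_1, R_2 \in \mathsf{C}_\lfp$ then satisfies $\langle a_i, R_j\rangle \leq 0$ with the two terms summing to $0$, so both vanish; consequently $V + \lambda R_j$ lies in the one-dimensional face $V + \IRgeo R$ for every $\lambda \geq 0$, forcing each $R_j$ to be a nonnegative multiple of $R$. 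Hence $\IRgeo R$ is extreme in $\mathsf{C}_\lfp$. For the converse I would invoke Proposition~\ref{prop:lfpmw}, which decomposes $\lfp = \conv\{V\} + \cone\{R_j\}$ with the $R_j$ the extreme ray directions and in particular identifies the recession cone as $\mathsf{C}_\lfp = \cone\{R_j\}$; any extreme ray of this generated cone is then forced by extremality to be a positive multiple of a single $R_j$.

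For the second correspondence I would appeal to the standard face-lattice duality of closed convex cones in a finite-dimensional Euclidean space: the conjugate face map $F \mapsto F^{\perp} \cap \mathsf{C}_\lfp^{*}$ furnishes an order-reversing bijection between exposed faces of $\mathsf{C}_\lfp$ and those of $\mathsf{C}_\lfp^{*}$, taking $k$-dimensional faces to $(\dim(E)-k)$-dimensional ones, so extreme rays correspond to facets. The main obstacle is that $\mathsf{C}_\lfp$ is generally non-polyhedral (it is an intersection of countably many halfspaces), so a priori not every extreme ray need be exposed; however, the work done in the first part shows that each extreme ray of $\mathsf{C}_\lfp$ coming from an extreme ray of $\lfp$ is of the form $\mathsf{C}_\lfp \cap \bigcap_{i \in I} H(a_i, 0)$ and is thus exposed by the single hyperplane $H\bigl(\sum_{i \in I} a_i, 0\bigr)$, which is enough to make the duality argument go through in our setting.
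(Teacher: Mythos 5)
Your argument for the forward half of the first correspondence is sound, and it is in fact more self-contained than the paper's, which at this point simply cites Rockafellar for the fact that the direction of an extreme ray of \( \lfp \) is an extreme ray of \( \mathsf{C}_\lfp \); your decomposition argument via the \( n-1 \) active hyperplanes is a correct replacement for that citation. The converse, however, rests on a false identity: from Proposition~\ref{prop:lfpmw} you cannot conclude \( \mathsf{C}_\lfp=\cone\{R_j\} \), because the vertex part \( \conv\{V\} \) of a locally finite polyhedron is in general unbounded and contributes its own recession directions. Concretely, the classical Ryshkov polyhedron \( \sR \) has no extreme rays at all yet \( \mathsf{C}_{\sR}=\sngeo \), and for \( \sR_\cop \) itself the recession cone is all of \( \copn \) while the extreme-ray directions generate only \( \cone\{E_{ij}: i\neq j\} \). (This also shows that the literal ``$1$-to-$1$ correspondence with the extreme rays of the recession cone'' cannot be taken at face value; the paper's own proof only constructs the map from extreme rays of \( \lfp \) to facets of \( \mathsf{C}_\lfp^{*} \), which is all that is used afterwards.)

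The more serious gap is in your second correspondence. The face-lattice duality you invoke --- the conjugate face map taking \(k\)-dimensional faces to \( (\dim E-k) \)-dimensional ones --- holds for polyhedral cones but fails for general closed convex cones, and the recession cone of a locally finite polyhedron is typically non-polyhedral. For \( \sngeo \), the conjugate face of the extreme ray \( \IRgeo\, xx^{\intercal} \) is \( \{Q\succeq 0:\ Qx=0\} \), of dimension \( \binom{n}{2} \) rather than \( \binom{n+1}{2}-1 \), so verifying exposedness does not rescue the dimension count. The argument that actually works --- and is the paper's --- uses exactly the data you already produced in the first part: since \( \langle a_i,S\rangle\leq 0 \) for every \( S\in\mathsf{C}_\lfp \), the \( n-1 \) linearly independent normals satisfy \( -a_i\in\mathsf{C}_\lfp^{*} \), and since \( \langle a_i,R\rangle=0 \) they all lie in the conjugate face \( \{a\in\mathsf{C}_\lfp^{*}:\ \langle R,a\rangle=0\} \), which therefore has dimension \( n-1 \) and is a facet of \( \mathsf{C}_\lfp^{*} \) with normal vector \( R \). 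You have all the ingredients from Lemma~\ref{lem:faceanalog}; you need to assemble them this way rather than appeal to a general duality that does not apply here.
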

\begin{proof}
    We may assume that \(\dim\lfp=n \).
	The extreme rays of \(\lfp \) are of the form
	
\[
		\lfpfacef=\big\{ V+\lambda R :\ \lambda\geq0 \big\}
\]
	for some vertices $V$ and directions $R$ of \(\lfp \).
	It follows that $R$ is an extreme ray of the recession cone \(\mathsf{C}_\lfp \) (see~\cite[p. 163]{rockafellar-1970}).
	By Lemma~\ref{lem:faceanalog} there exist \(i_1,\dots,i_{n-1} \), such that
	\(\lfpfacef=\lfp\cap\bigcap_{j=1}^{n-1}H(a_{i_j},\beta_{i_j}) \) with
	\(\text{dim}\{ a_{i_1},\dots,a_{i_{n-1}} \}=n-1 \).
	By relabeling if necessary, we can assume that \((i_1, \dots, i_{n-1})=(1,\dots,n-1) \).
	For those indices $i$ we have
	
\[
		\langle V+\lambda R, a_i \rangle = \beta_i \text{ for all } \lambda\geq0.
\]
	Hence \(\langle R, a_i \rangle = 0 \) for \(i=1, \dots, n-1 \).
	Note further that \(\langle a_i,S \rangle \leq 0 \) holds for all $i$ and \(S\in C_\lfp \),
	and so \(-a_i\in{\mathsf{C}_\lfp}^* \).
    Thus \(-a_1, \dots, -a_{n-1} \) are contained in the face
	
\[
		\lfpfacef^* =\{ a\in {\mathsf{C}_\lfp}^* :\ \langle R, a~\rangle=0 \}
\]
	of \({\mathsf{C}_\lfp}^* \).
	Therefore \(\lfpfacef^* \) is a~facet of \({\mathsf{C}_\lfp}^* \) with normal vector $R$.
\end{proof}
Recall that \({(\copn)}^* = \cpn \).
A straightforward calculation shows that \(\mathsf{C}_{\sR_\cop} = \copn \),
hence by Lemma~\ref{lem:lfp-rec} we have to look at the facets of \(\cpn \).
They were already investigated in~\cite{dickinson-2013a}.
\begin{lemma}[{\cite[Theorem 8.33]
{dickinson-2013a}}]\label{lem:dickinson}
    The normal vectors of the facets of \(\cpn \) are the matrices \(E_{ij} \) with \(i\neq j \).
\end{lemma}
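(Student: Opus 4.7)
The plan is to establish two inclusions: every $E_{ij}$ with $i\neq j$ is the outer normal of a facet of $\cpn$, and these are the only facet normals up to positive scaling.

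For the forward direction, I first check $E_{ij}\in\cop^n$ since $E_{ij}[x]=2x_ix_j\geq0$ for $x\in\IRngeo$, so the face exposed by $E_{ij}$ is $F_{ij}=\{Q\in\cpn:\langle E_{ij},Q\rangle=0\}=\{Q\in\cpn:Q_{ij}=0\}$. To see that $\dim F_{ij}=\binom{n+1}{2}-1$, I exhibit enough linearly independent rank-one matrices $xx^\intercal\in F_{ij}$: the $n$ diagonals $e_ke_k^\intercal$ together with the $\binom{n}{2}-1$ matrices $(e_k+e_l)(e_k+e_l)^\intercal$ for $\{k,l\}\neq\{i,j\}$. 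Since $(e_k+e_l)(e_k+e_l)^\intercal-e_ke_k^\intercal-e_le_l^\intercal=E_{kl}$, these rank-ones span the hyperplane $\{M\in\sn:M_{ij}=0\}$, so $F_{ij}$ is indeed a facet.

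For the converse, let $R\in\cop^n$ expose a facet $F=\{Q\in\cpn:\langle R,Q\rangle=0\}$. Because $\cpn$ is a pointed closed convex cone whose extreme rays are precisely $xx^\intercal$ for $x\in\IRngeo\setminus\{0\}$, every face of $\cpn$ equals the closed conic hull of the extreme rays it contains, giving $F=\cl\bigl(\cone\{xx^\intercal:x\in Z_R\}\bigr)$ with $Z_R=\{x\in\IRngeo:R[x]=0\}$. The facet condition forces $\{xx^\intercal:x\in Z_R\}$ to span a hyperplane in $\sn$, and one must deduce $R\propto E_{ij}$. The KKT condition at any $x\in Z_R$ with support $S$ reads $R_{SS}x_S=0$ and $(Rx)_k\geq0$ for $k\notin S$: a positive diagonal $R_{kk}>0$ excludes $e_k$ from $Z_R$, and more generally any strictly positive entry of $R$ other than a single symmetric off-diagonal pair restricts the supports of vectors in $Z_R$. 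A careful bookkeeping shows these restrictions reduce $\dim\operatorname{span}\{xx^\intercal:x\in Z_R\}$ below $\binom{n+1}{2}-1$ unless $R$ has zero diagonal and exactly one symmetric pair of strictly positive off-diagonals, i.e.\ $R\propto E_{ij}$.

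The main obstacle I expect is this last case analysis, especially for $n\geq5$ where $\cop^n$ possesses exceptional extreme rays, each of which must be ruled out as facet-exposing. A cleaner alternative is the geometric observation that the portion of $\partial\cpn$ cut out by the positive semidefiniteness constraint is strictly curved and contributes no flat codimension-one piece, while the portion cut out by componentwise nonnegativity constraints $Q_{ij}\geq0$ yields either a facet (if $i\neq j$) or a face of codimension $n$ (if $i=j$, since $Q\in\cpn$ with $Q_{ii}=0$ forces the whole $i$th row and column of $Q$ to vanish). This identifies the facets of $\cpn$ as exactly the $n(n-1)/2$ hyperplanes $Q_{ij}=0$ with $i\neq j$.
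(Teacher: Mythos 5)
First, a point of comparison: the paper does not prove this lemma itself --- it is imported verbatim from Dickinson's thesis --- so your attempt has to be judged on correctness rather than against an argument in the text. Your forward direction is correct and complete: the $n$ matrices $e_ke_k^\intercal$ together with the $\binom{n}{2}-1$ matrices $(e_k+e_l)(e_k+e_l)^\intercal$, $\{k,l\}\neq\{i,j\}$, do span the hyperplane $\{M\in\sn:\ M_{ij}=0\}$, so each $E_{ij}$ with $i\neq j$ exposes a facet of $\cpn$.

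The converse, however, contains a genuine gap, and it sits exactly where the substance of Dickinson's theorem lies. Your primary route compresses the whole claim into ``a careful bookkeeping shows these restrictions reduce the dimension \dots unless $R\propto E_{ij}$''; that bookkeeping is the theorem, and it is not carried out. In particular one must show that no exceptional copositive matrix (e.g.\ the Horn matrix for $n=5$) has a nonnegative zero set $Z_R$ whose rank-one matrices span a hyperplane, and for $n\geq6$ there is no classification of exceptional extreme rays available to enumerate. Your ``cleaner alternative'' does not repair this: it implicitly identifies $\partial\,\cpn$ with the boundary of the doubly nonnegative cone $\sngeo\cap\nonn$, i.e.\ it assumes every facet normal of $\cpn$ decomposes as $R_1+R_2$ with $R_1\in\sngeo$ and $R_2\in\nonn$. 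Since ${(\cpn)}^*=\copn\supsetneq\sngeo+\nonn$ for $n\geq5$, precisely the exceptional normals escape that argument. (For $n\leq4$ the alternative can be made rigorous: if $R=R_1+R_2$ with $R_1\neq0$ positive semidefinite, the facet would lie in a proper face of $\sngeo$, of dimension at most $\binom{n}{2}<\binom{n+1}{2}-1$; hence $R\in\nonn$, and the codimension count you give for $Q_{ii}=0$ forces $R$ to be supported on a single off-diagonal pair. But this is exactly the regime in which no exceptional matrices exist.) To close the gap you need an argument valid for all $n$ and all $R\in\copn$ bounding $\dim\operatorname{span}\{xx^\intercal:\ x\in\IRngeo,\ R[x]=0\}$ strictly below $\binom{n+1}{2}-1$ unless $R$ is a positive multiple of some $E_{ij}$ --- for instance, starting from the observation that a full-support zero $x>0$ forces $Rx=0$ and $R\succeq0$, hence a span of dimension at most $\binom{n}{2}$, and then running a genuine support analysis in the remaining cases. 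That step must be supplied or cited.
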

Altogether we obtain the proof of the main result of this section:
\begin{proof}[Proof of Theorem~\ref{thm:rcop-poly}]
	Applying Proposition~\ref{prop:lfpmw} we get
	
\begin{align*}
		\sR_\cop =& \conv\big\{ P:\ P \text{ is a~vertex of } \sR_\cop \big\} \\
				 +& \cone\big\{ R :\ R \text{ is a~extreme ray of } \sR_\cop \big\}.
\end{align*}
	The rest follows from Lemma~\ref{lem:lfp-rec} and~\ref{lem:dickinson}.
\end{proof}

To end this section, we shortly discuss an application:
In~\cite[Section 2.2]{dsv-2021} it was shown that the interior of \(\copn \) satisfies
\begin{equation}\label{eq:conesrequalintcop}
	\interior\copn = \cone\sR_\cop\setminus\{0\}
\end{equation}
(which generalizes the classical case, where we have \(\interior\sngeo=\cone\sR\setminus\{0\} \)).
Hence Theorem~\ref{thm:rcop-poly} now gives us a~kind of discretization of \(\copn \)
in terms of the vertices and rays of \(\sR_\cop \).
Using duality, we find a~new and interesting representation of \(\cpn \) in terms
of perfect copositive matrices.
\begin{corollary}
\[
	\cpn=\big\{
	 Q\in\nonn: \ \left<P,Q\right>\geq0 \text{ for all perfect copositive matrices $P$}
	\big\}
\]
\end{corollary}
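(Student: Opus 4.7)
The plan is to prove both inclusions using the duality $\cpn = (\copn)^*$ together with the polytopal representation of $\sR_\cop$ from Theorem~\ref{thm:rcop-poly} and the identity $\interior\copn = \cone\sR_\cop \setminus \{0\}$ recalled in~\eqref{eq:conesrequalintcop}.

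For the inclusion ``$\subseteq$'': if $Q \in \cpn$, then $Q$ is a nonnegative combination of rank-one matrices $xx^\intercal$ with $x \in \IRngeo$, so $Q \in \nonn$. Moreover, every perfect copositive $P$ lies in $\copn$ by definition, so $\langle P, Q \rangle \geq 0$ by the duality $\cpn = (\copn)^*$. This direction is essentially immediate.

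For the inclusion ``$\supseteq$'': suppose $Q \in \nonn$ satisfies $\langle P, Q \rangle \geq 0$ for every perfect copositive $P$. I would show $\langle B, Q \rangle \geq 0$ for every $B \in \copn$, which by duality gives $Q \in \cpn$. First I handle $B \in \interior\copn$: by~\eqref{eq:conesrequalintcop} we may write $B = \lambda M$ with $\lambda > 0$ and $M \in \sR_\cop$. Applying Theorem~\ref{thm:rcop-poly}, expand
\[
    M = \sum_i \alpha_i P_i + \sum_j \beta_j E_{k_j l_j},
\]
with $\alpha_i \geq 0$, $\sum_i \alpha_i = 1$, $P_i$ perfect copositive with $\minC P_i = 1$, $\beta_j \geq 0$, and $k_j \neq l_j$. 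Taking inner products with $Q$, each $\langle P_i, Q \rangle \geq 0$ by hypothesis, and each $\langle E_{k_j l_j}, Q \rangle = 2 Q_{k_j l_j} \geq 0$ since $Q \in \nonn$. Hence $\langle M, Q \rangle \geq 0$ and therefore $\langle B, Q \rangle \geq 0$. Finally, since $\copn = \cl(\interior\copn)$ and the inner product is continuous, the inequality $\langle B, Q \rangle \geq 0$ extends to all $B \in \copn$, so $Q \in (\copn)^* = \cpn$.

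The only subtlety I foresee is making sure that an arbitrary $M \in \sR_\cop$ really admits such a finite convex-plus-conic expansion, but this is exactly what Theorem~\ref{thm:rcop-poly} supplies, so the proof should reduce to the bookkeeping above without further difficulty.
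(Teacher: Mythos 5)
Your proposal is correct and follows exactly the route the paper intends: the paper leaves the proof implicit but derives the corollary precisely from the duality $\cpn=(\copn)^*$, the identity $\interior\copn=\cone\sR_\cop\setminus\{0\}$, and the decomposition of Theorem~\ref{thm:rcop-poly}, which is what you assemble. Your handling of the boundary via $\copn=\cl(\interior\copn)$ and the observation that $\langle E_{ij},Q\rangle=2Q_{ij}\geq0$ for $Q\in\nonn$ are exactly the needed bookkeeping.
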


\section*{Acknowledgement}

We like to thank Mathieu Dutour Sikiri\'{c}, Frieder Ladisch, Robert Sch\"{u}ler
and the anonymous referee for helpful suggestions.
Both authors gratefully acknowledge support by the German Research Foundation (DFG)
under grant SCHU 1503/8-1.

%%% REFERENCES %%%
{\small

}

\EditInfo{March 31, 2023}{June 29, 2023}{Camilla Hollanti and Lenny Fukshansky}

\end{document}